\newcommand{\cB}{\mathcal B}
\newcommand{\cG}{\mathcal G}
\newcommand{\be}{{\mathbf e}}
\newcommand{\ben}{{\be^*_n}}
\newcommand{\cLL}{{\mathbf {L}}_m^{\rm ch}}
\newcommand{\cL}{{\mathbf {L}}_m^{{\rm ch}, t}}
\newcommand{\ch}{{\mathfrak{CG}}}
\newcommand{\cht}{{\mathfrak{CG}}^t}
\newcommand{\bL}{{\mathbf L}_m}
\newcommand{\bq}{{\mathbf q}}
\newcommand{\tbe}{{\tilde\be}}
\newcommand {\SC} {{\mathbb C}}
\newcommand {\SG} {{\mathbb G}}
\newcommand {\SK} {{\mathbb K}}
\newcommand {\SN} {{\mathbb N}}
\newcommand {\SR} {{\mathbb R}}
\newcommand {\ST} {{\mathbb T}}
\newcommand {\SX} {{\mathbb X}}
\newcommand {\SZ} {{\mathbb Z}}
\newcommand {\tphi} {{\tilde\varphi}}
\newcommand {\tmu} {{\tilde\mu}}
\newcommand {\tg} {{\tilde{g}}}
\newcommand {\al} {{\alpha}}
\newcommand {\dt} {{\delta}}
\newcommand {\e} {{\varepsilon}}
\newcommand {\ga} {{\gamma}}
\newcommand {\Ga} {{\Gamma}}
\newcommand {\la} {{\lambda}}
\newcommand {\La} {{\Lambda}}
\newcommand {\bLa} {{{\bar{\Lambda}}}}
\newcommand{\fN}{\mathfrak{N}}
\def\supp{\mathop{\rm supp}}
\def\dist{\mathop{\rm dist}}
\def\sig{\mathop{\rm sign}}
\numberwithin{equation}{section}
\newtheorem{theorem}{Theorem}[section]
\newtheorem{lemma}[theorem]{Lemma}
\newtheorem{Remark}[theorem]{Remark}
\newtheorem{proposition}[theorem]{Proposition}
\newtheorem{example}[theorem]{Example}
\newcommand {\ProofEnd} {
\begin{flushright} \vskip -0.2in $\Box$ \end{flushright}}
\newcommand{\Ba}[1]{\begin{array}{#1}}
\newcommand{\Ea}{\end{array}}
\newcommand{\Bd}{\begin{description}}
\newcommand{\Ed}{\end{description}}
\newcommand{\Be}{\begin{equation}}
\newcommand{\Ee}{\end{equation}}
\newcommand{\Bea}{\begin{eqnarray}}
\newcommand{\Eea}{\end{eqnarray}}
\newcommand{\Beas}{\begin{eqnarray*}}
\newcommand{\Eeas}{\end{eqnarray*}}
\newcommand{\Benu}{\begin{enumerate}}
\newcommand{\Eenu}{\end{enumerate}}
\newcommand{\Bi}{\begin{itemize}}
\newcommand{\Ei}{\end{itemize}}
\newcommand{\BR}{\begin{Remark} \em}
\newcommand{\ER}{\end{Remark}}
\newcommand{\BE}{\begin{example} \em}
\newcommand{\EE}{\end{example}}
\newcommand {\Ds} {\displaystyle}
\newcommand {\mand} {{\quad\mbox{and}\quad}}
\renewcommand {\mid} {{\,\,\,\colon\,\,\,}}
\newcommand{\bline}{{\bigskip

\noindent}}
\newcommand{\sline}{{\smallskip

\noindent}}
\newcommand {\bone} {{\bf 1}}
\newcommand{\cGtm}{{\SG_m^{{\rm ch},t}}}
\renewcommand {\span} {\mbox{\rm span}\,}
\newcommand{\sgn}{\mathrm{sign \,}}
\newcommand{\cupdot}{\mathbin{\mathaccent\cdot\cup}}
\newcounter{reg}
\begin{document}

\title[Lebesgue inequalities for Chebyshev Greedy Algorithms]{Lebesgue inequalities for  Chebyshev Thresholding Greedy Algorithms}%
\author{P. M. Bern\'a}
\address{Pablo M. Bern\'a
\\
Departamento de Matem\'aticas
\\
Universidad Aut\'onoma de Madrid
\\
28049 Madrid, Spain} \email{pablo.berna@uam.es}

\author{\'O. Blasco}

\address{\'Oscar Blasco
\\
Department of Analysis Mathematics \\ Universidad de Valencia, Campus de
Burjassot\\ Valencia, 46100, Spain}
\email{oscar.blasco@uv.es}

\author{G. Garrig\'os}
\address{Gustavo Garrig\'os
\\
Departamento de Matem\'aticas
\\
Universidad de Murcia
\\
30100 Murcia, Spain} \email{gustavo.garrigos@um.es}

\author{E. Hern\'andez}
\address{Eugenio Hern\'andez
\\
Departamento de Matem\'aticas
\\
Universidad Aut\'onoma de Madrid
\\
28049 Madrid, Spain} \email{eugenio.hernandez@uam.es}

\author{T. Oikhberg}
\address{Timur Oikhberg
	\\
	Department of Mathematics
	\\
University of Illinois Urbana-Champaign
	\\
	Urbana, IL 61807, USA} \email{oikhberg@illinois.edu}

\thanks{The research of the first, third and fourth authors are partially supported by the grants MTM-2016-76566-P (MINECO, Spain) and 19368/PI/14 (\emph{Fundaci\'on S\'eneca}, Regi\'on de Murcia, Spain). Also, the first author is supported by a PhD Fellowship from the program "Ayudas para contratos predoctorales para la formaci\'on de doctores 2017" (MINECO, Spain). The second author is supported by Grant MTM-2014-53009-P (MINECO, Spain). Third author partially supported by grant MTM2017-83262-C2-2-P (Spain). The fourth author has received funding from the European Union's Horizon 2020 research and innovation programme under the Marie Sk\l odowska-Curie grant agreement No 777822}

\subjclass{41A65, 41A46, 46B15.}

\keywords{thresholding Chebyshev greedy algorithm, thresholding greedy algorithm, quasi-greedy basis, semi-greedy bases.}

\begin{abstract}
We establish estimates for the Lebesgue parameters of the Chebyshev Weak Thresholding  Greedy Algorithm in the case of general bases in Banach spaces. These generalize and slightly improve earlier results in \cite{DKO}, and are complemented with examples showing the optimality of the bounds. Our results also correct certain bounds recently announced in \cite{SY}, and answer some questions left open in that paper.
\end{abstract}

\maketitle

\section{Introduction }
\setcounter{equation}{0}\setcounter{footnote}{0}
\setcounter{figure}{0}

Let $\mathbb X$ be a Banach space over $\SK = \SR$ or $\SC$, let $\SX^*$ be its dual space, and consider a system $\lbrace \be_n, \ben\rbrace_{n=1}^\infty\subset \SX\times\SX^*$ with the following properties:
\begin{itemize}
\item[a)] $0<\inf_n \lbrace \Vert \be_n\Vert, \Vert \ben\Vert\rbrace\leq \sup_n\lbrace \Vert \be_n\Vert, \Vert \ben\Vert\rbrace<\infty$
\item[b)] $\ben(\be_m) = \delta_{n,m}$, for all $n,m\geq 1$
\item[c)] $\SX= \overline{\span\lbrace \be_n : n\in\mathbb N\rbrace}$
\item [d)] $\SX^* = \overline{\span\lbrace \ben : n\in\mathbb N\rbrace}^{w^*}$.
\end{itemize}

Under these conditions $\mathcal B=\lbrace \be_n\rbrace_{n=1}^\infty$ is called a \textit{seminormalized Markushevich basis for $\SX$} (or M-basis for short), with \textit{dual system} $\lbrace \ben\rbrace_{n=1}^\infty$.
Sometimes we shall consider the following special cases

\begin{itemize}
\item[e)] $\mathcal B$ is a \textit{Schauder basis} if $K_b:=\sup_N \Vert S_N\Vert<\infty$, where $S_Nx:=\sum_{n=1}^N \be^*_n(x)\be_n$ is the $N$-th partial sum operator
\end{itemize}
\begin{itemize}
\item[f)] $\mathcal B$ is a \textit{Ces\`aro basis} if $\sup_N \Vert F_N\Vert<\infty$, where
$F_N:= \frac{1}{N}\sum_{n=1}^N S_n$ is the $N$-th Ces\`aro operator. In this case we use the constant
\Be
\beta=\;\max\big\{\sup_N\|F_N\|,\,\sup_N\|I-F_N\|\big\}.
\label{beta}\Ee
\end{itemize}
With every $x\in\mathbb X$, we shall associate the formal series $x\sim \sum_{n=1}^{\infty} \ben(x)\be_n$, where a)-c) imply that $\lim_{n}\ben(x)=0$. As usual, we denote $\supp x=\lbrace n\in\mathbb N : \ben(x)\neq 0\rbrace$.

\

We recall standard notions about (weak) greedy algorithms; see e.g. the texts \cite{Tem1, Tem15} for details and  historical background.
Fix $t\in(0,1]$. We say that $A$ is a \textit{$t$-greedy set for $x$ of order $m$}, denoted $A\in G(x, m, t)$,  if
$\vert A\vert=m$ and
\Be\min_{n\in A}\vert \ben(x)\vert \geq t\cdot\max_{n\not\in A}\vert\ben(x)\vert.\label{tineq}\Ee
A \textit{$t$-greedy operator of order $m$} is any mapping $\mathcal G_m^t: \SX\to \SX$ which at each $x\in\SX$ takes the form
$$\mathcal G_m^t(x)=\sum_{n\in A}\ben(x)\be_n,\quad \text{for some set}\quad A=A(x,\cG^t_m)\in G(x, m, t).$$
We write $\SG_m^t$ for the set of all $t$-greedy operators of order $m$.
The approximation scheme which assigns a sequence $\{\mathcal G_m^t(x)\}_{m=1}^\infty$ to each vector $x\in\SX$ is called a \textit{Weak Thresholding Greedy Algorithm} (WTGA), see \cite{KT2,TemW}. When $t=1$ one just says Thresholding Greedy Algorithm (TGA), and drops the super-index $t$, that is $\mathcal G_m^1 = \mathcal G_m$, etc.

\

It is standard to quantify the efficiency of these algorithms, among all possible $m$-term approximations, in terms of \emph{Lebesgue-type inequalities}.
That is, for each $m=1, 2,...$, we look for the smallest constant $\bL^t$ such that
\begin{eqnarray}\label{weakin}
\Vert x-\mathcal G_m^t(x)\Vert \leq \bL^t\sigma_m(x),\quad \forall\; x\in\SX, \quad\forall\; \mathcal G_m^t\in \SG_m^t,
\end{eqnarray}
where
$$\sigma_m(x):=\inf\Big\lbrace \Big\Vert x-\sum_{n\in B}b_n \be_n\Big\Vert\mid b_n\in\SK,\quad \vert B\vert \leq m\Big\rbrace.$$
We call the number $\bL^t$ the \emph{Lebesgue parameter} associated with the WTGA, and we just write $\bL$ when $t=1$. We refer to \cite[Chapter 3]{Tem15} for a survey on such inequalities, and to \cite{GHO,DKO,AA1,BBG,BBGHO}  for recent results. It is known that $\bL^t=O(1)$ holds for a fixed $t$ if and only if it holds for all $t\in(0,1]$, and if and only if $\cB$ is unconditional and democratic; see \cite{KT} and \cite[Thm 1.39]{Tem1}. In this special case $\cB$ is called a \emph{greedy basis}.



\

In this paper we shall be interested in \textit{Chebyshev thresholding  greedy algorithms}. These were introduced by Dilworth, Kalton and Kutzarova, see \cite[\S3]{DKK}, as an enhancement of the TGA. Here, we use the weak version considered in \cite{DKO}. Namely, for fixed $t\in(0,1]$ we say that $\ch_m^t:\SX\to\SX$ is a  \textit{Chebyshev $t$-greedy operator} of order $m$ if  for every $x\in\SX$ the set $A=\supp\ch_m^t(x)\in G(x, m, t)$ and moreover
$$\Vert x-\ch_m^t(x)\Vert = \min\Big\lbrace \big\Vert x-\sum_{n\in A}a_n\be_n\big\Vert\mid a_n\in\SK\Big\rbrace.$$
Finally, we define the \textit{weak Chebyshevian Lebesgue parameter} $\cL$ as the smallest constant such that
$$\Vert x-\ch_m^t(x)\Vert \leq \cL\sigma_m(x),\quad \forall\; x\in\SX,\quad \forall \;\ch_m\in \cGtm\,,$$
where $\cGtm$ is the collection of all Chebyshev $t$-greedy operators of order $m$. As before, when $t=1$ we shall omit the index $t$, that is $\mathbf{L}_m^{{\rm ch}}:=\mathbf{L}_m^{{\rm ch},1}$.

When  $\mathbf{L}_m^{{\rm ch}}=O(1)$ the system $\mathcal B$ is called semi-greedy; see \cite{DKK}.
We remark that the first author recently established that a Schauder basis $\cB$ is semi-greedy if and only if is quasi-greedy and democratic; see \cite{B}.

\

In this paper we shall be interested in quantitative bounds of $\cL$ in terms of the quasi-greedy and democracy parameters of a general M-basis $\cB$.
Earlier bounds were obtained by Dilworth, Kutzarova and Oikhberg in \cite{DKO} when $\cB$ is a quasi-greedy basis, and very recently, some improvements were also announced by C. Shao and P. Ye in \cite[Theorem 3.5]{SY}. Unfortunately, various arguments in the last paper seem not to be correct, so one of our goals here is to
give precise statements and proofs for the results in \cite{SY}, and also settle some of the questions which are left open there.

\

To state our results, we recall the definitions of the involved parameters.
Given a finite set $A\subset\SN$, we shall use the following standard notation for the indicator sums:
$$\bone_A = \sum_{n\in A}\be_n\mand \bone_{\varepsilon A}=\sum_{n\in A}\varepsilon_n \be_n,\quad\e\in\Upsilon$$
where $\Upsilon$ is the set of all $\varepsilon = \lbrace\varepsilon_n\rbrace_n\subset\SK$ with $\vert\varepsilon_n\vert=1$. Similarly, we write
\[
P_A(x)=\sum_{n\in A}\ben(x)\be_n.
\]
The relevant parameters for this paper are the following:
\begin{itemize}
\item Conditionality parameters:
$$k_m := \sup_{\vert A\vert \leq m}\Vert P_A\Vert\mand k_m^c=\sup_{\vert A\vert \leq m}\Vert I-P_A\Vert.$$
\item Quasi-greedy parameters: 
$$g_m := \sup_{\mathcal G_k\in \SG_k,\,k\leq m}\,\|\cG_k\|
\mand g_m^c := \sup_{\mathcal G_k\in \SG_k,\,k\leq m}\,\|I-\cG_k\|.
$$
Below we shall also use the variant
\[
\tg_m:=\sup_{{\cG'<\cG}\atop{\cG\in\SG_k,\;k\leq m}}\|\cG-\cG'\|,
\]
where $\cG'<\cG$ means that $A(x,\cG')\subset A(x,\cG)$ for all $x$; see \cite{BBG}.

\item Super-democracy parameters:
$$\tilde{\mu}_m = \sup_{\underset{\vert\varepsilon\vert=\vert\eta\vert=1}{\vert A\vert=\vert B\vert\leq m}}\dfrac{\Vert \bone_{\varepsilon A}\Vert}{\Vert \bone_{\eta B}\Vert}\mand \tilde{\mu}_m^d=\sup_{\underset{\vert\varepsilon\vert=\vert\eta\vert=1}{\vert A\vert=\vert B\vert\leq m, \; A\cap B=\emptyset}}\dfrac{\Vert \bone_{\varepsilon A}\Vert}{\Vert \bone_{\eta B}\Vert}.$$

\item[$\bullet$] Quasi-greedy parameters for constant coefficients (see \cite[(3.11)]{BBG})\[
\ga_m=\sup_{\underset{B\subset A,\;\vert A\vert\leq m}{\vert\varepsilon\vert=1}}\dfrac{\Vert \bone_{\varepsilon B}\Vert}{\Vert \bone_{\e A}\Vert}.
\]
\end{itemize}
Note that $\ga_m\leq g_m\leq \tg_m\leq 2g_m$, but in general $\ga_m$ may be much smaller than $g_m$; see e.g. \cite[\S5.5]{BBG}. Likewise, in \S5 below we show that $\tmu^d_m$ may be much smaller than $\tmu_m$, except for Schauder bases in which both quantities turn out to be equivalent; see Theorem \ref{th_mudN}.

\

Our first result is a general upper bound, which improves and extends \cite[Theorem 2.4]{SY}.

\begin{theorem}\label{main2}
Let $\mathcal B$ be an M-basis in $\mathbb X$, and let $\mathfrak{K}=\sup_{n,j}\Vert \ben\Vert\Vert \be_j\Vert$. Then,
\Be
\cL \leq\, 1\,+\,(1+\tfrac1t)\,\mathfrak{K}\,m\,, \quad \forall\;m\in\SN,\;\; t\in(0,1].\label{ineq_th1}\Ee
Moreover, there exists a pair $(\SX,\cB)$ where the equality is attained for all $m$ and $t$.
\end{theorem}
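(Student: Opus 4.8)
The plan is to prove the upper bound \eqref{ineq_th1} by a direct estimate, splitting $x - \ch_m^t(x)$ according to the greedy set $A = \supp\ch_m^t(x)$ and a near-optimal $m$-term approximation. First I would fix $x \in \SX$, a Chebyshev $t$-greedy operator $\ch_m^t$, and let $A = \supp\ch_m^t(x) \in G(x,m,t)$. Given $\varepsilon > 0$, choose $z = \sum_{n \in B} b_n \be_n$ with $|B| \le m$ and $\|x - z\| \le \sigma_m(x) + \varepsilon$. By the Chebyshev minimality defining $\ch_m^t$, we have $\|x - \ch_m^t(x)\| \le \|x - y\|$ for \emph{any} $y$ supported on $A$; the idea is to pick a good competitor $y$ supported on $A$ built out of $z$ and the greedy coefficients of $x$ on $A \setminus B$, and then estimate $\|x - y\|$.

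The key steps, in order: (1) Write $\|x - \ch_m^t(x)\| \le \|x - y\|$ where $y = P_A(z) + P_{A \setminus B}(x)$ — i.e.\ on $A \cap B$ we copy $z$, and on $A \setminus B$ we copy the true coefficients $\ben(x)$. Then $x - y = (x - z) - P_{A}(x-z) + P_{A \cap B}(x - z) - P_{A\setminus B}(x) + \ben$-bookkeeping; more cleanly, decompose $x - y = (I - P_A)(x - z) + P_{A}(x-z) - (P_A(z) - z|_A) + \ldots$ — I would organize this so that the ``error'' $x - y$ is a sum of (i) $x - z$, (ii) a term supported on the set $B \setminus A$ which has size $\le m$, and (iii) a term supported on $A \setminus B$ whose coefficients are the greedy coefficients $\ben(x)$, $n \in A\setminus B$, each bounded in modulus by $\tfrac1t \max_{j \notin A}|\be^*_j(x)| \le \tfrac1t \max_{j \in B\setminus A}|\be^*_j(x-z)|$ using the $t$-greedy inequality \eqref{tineq} and the fact that $\ben(x) = \ben(x - z)$ for $n \notin B$. (2) Bound (i) by $\|x-z\| \le \sigma_m(x) + \varepsilon$. (3) Bound (ii) and (iii) crudely: any vector $\sum_{n \in F} c_n \be_n$ with $|F| \le m$ satisfies $\|\sum_{n\in F} c_n \be_n\| \le m \max_n |c_n| \max_j \|\be_j\| \le m\, \mathfrak{K}\, \max_n|c_n| / \inf_j\|\ben\|$ — but better, estimate each coefficient $c_n = \ben(w)$ by $|\ben(w)| \le \|\ben\|\,\|w\|$ so that $\|\sum_{n \in F} \ben(w)\be_n\| \le |F|\, \mathfrak{K}\, \|w\|$. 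Applying this with $w = x - z$ for term (ii) gives a contribution $\le m\,\mathfrak{K}\,\|x-z\|$, and for term (iii), since each $|\ben(x)| = |\ben(x-z)| \le \tfrac1t\|\ben\|\,\|x-z\|$ for $n \in A\setminus B$ (via the $t$-greedy inequality comparing to $\max_{j\in B\setminus A}|\ben[j](x-z)|$), a contribution $\le \tfrac1t m\, \mathfrak{K}\,\|x-z\|$. (4) Summing, $\|x - \ch_m^t(x)\| \le \big(1 + (1 + \tfrac1t)\mathfrak{K}\,m\big)\|x - z\| \le \big(1 + (1+\tfrac1t)\mathfrak{K}\,m\big)(\sigma_m(x) + \varepsilon)$; let $\varepsilon \to 0$.

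For the optimality statement I would construct an explicit finite-dimensional (or $\ell_1$-type) example. A natural candidate: take $\SX = \ell_1^{N}$ or a weighted $\ell_1$ space with a basis where one vector $\be_1$ is ``large'' and the rest are ``small,'' engineered so that the greedy algorithm is forced to pick the wrong coordinates — concretely one wants a vector $x$ whose largest-in-modulus coefficients sit on coordinates that are ``expensive'' in norm, while a cheap $m$-term approximation exists on other coordinates; then the Chebyshev correction on the forced set $A$ cannot do better than the displayed bound. The point is to saturate each of the three inequalities in the estimate simultaneously: the triangle inequality in step (4), the coefficient bound $|\ben(w)| = \|\ben\|\|w\|$ (needs $w$ aligned with a norming functional), and the $t$-greedy inequality \eqref{tineq} made an equality. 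I would pick norms/coefficients so that all of these are tight — e.g.\ a space built on a graph/tree or a direct sum construction as in \cite{DKO, SY} — and check that for the resulting $(\SX,\cB)$ one has $\cL = 1 + (1+\tfrac1t)\mathfrak{K}\,m$ for every $m$ and $t$.

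The main obstacle I expect is the optimality half: the upper-bound computation is routine bookkeeping once the competitor $y$ is chosen correctly, but building a single example $(\SX, \cB)$ that attains equality \emph{for all $m$ and all $t \in (0,1]$ at once} requires care — the $t$-dependence must enter through a coefficient ratio that the construction leaves free, and the $m$-dependence through the dimension of a ``bad'' block; getting $\mathfrak{K}$ (the product $\sup\|\ben\|\sup\|\be_j\|$, not $1$) to appear as a genuine factor means the basis cannot be normalized, so one has to track the two suprema separately throughout. I would also need to double-check that the chosen $\ch_m^t$ (the Chebyshev minimizer over $A$) genuinely cannot beat the bound — i.e.\ that the minimal-norm vector supported on the forced bad set $A$ really does leave an error of the asserted size.
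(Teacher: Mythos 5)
There is a genuine gap in both halves. For the upper bound, your accounting does not survive scrutiny. First, you invoke the $t$-greedy inequality \eqref{tineq} in the wrong direction: it says $\min_{n\in A}|\ben(x)|\geq t\max_{j\notin A}|\be^*_j(x)|$, so it bounds the coefficients \emph{outside} $A$ by $\tfrac1t$ times those \emph{inside}; your claimed bound $|\ben(x)|\leq\tfrac1t\max_{j\notin A}|\be^*_j(x)|$ for $n\in A\setminus B$ is false in general (those are the large coefficients). Luckily that term does not need \eqref{tineq} at all, since $\ben(x)=\ben(x-z)$ for $n\notin B$ gives $|\ben(x)|\leq\|\ben\|\,\|x-z\|$ directly. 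The real problem is your term (ii): with your competitor $y=P_{A\cap B}(z)+P_{A\setminus B}(x)$ the residual is $x-y=(x-z)-P_{A\setminus B}(x-z)+P_{B\setminus A}(z)$, so the piece supported on $B\setminus A$ has coefficients $b_n=\ben(x)-\ben(x-z)$, \emph{not} $\ben(x-z)$; bounding it requires both the greedy inequality (for $\ben(x)$, $n\notin A$) and the functional bound (for $\ben(x-z)$), costing $(1+\tfrac1t)\mathfrak{K}m\|x-z\|$, while the $A\setminus B$ piece costs another $\mathfrak{K}m\|x-z\|$. Any way you slice it, your choice of competitor yields only $\cL\leq 1+(2+\tfrac1t)\mathfrak{K}m$, not \eqref{ineq_th1}. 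The paper's proof uses the simpler competitor $p=P_{A\cap B}(x)$, for which $x-p=P_{B\setminus A}(x)+(I-P_B)(x-z)$; then $\|P_{B\setminus A}(x)\|\leq\tfrac1t\mathfrak{K}m\|x-z\|$ (transferring the $B\setminus A$ coefficients of $x$ to the $A\setminus B$ coefficients of $x-z$ via \eqref{tineq}, using $|A\setminus B|=|B\setminus A|$), and $\|(I-P_B)(x-z)\|\leq k^c_m\|x-z\|\leq(1+\mathfrak{K}m)\|x-z\|$, which gives exactly $1+(1+\tfrac1t)\mathfrak{K}m$. So the missing idea is precisely the right choice of competitor: do not copy the large coefficients of $x$ on $A\setminus B$, just project onto $A\cap B$.

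For the ``moreover'' part you give only a plan, not a construction, and you yourself flag it as the main obstacle; as it stands nothing is proved. The paper exhibits two concrete pairs attaining equality for all $m$ and $t$: the summing basis and the difference basis of $\ell^1$ (both with $\mathfrak{K}=2$), with explicit vectors $x$ for which $\ch^t_m(x)=0$, a lower bound $\|x-\ch^t_m(x)\|\geq 1+2(1+\tfrac1t)m$ up to the factor $\sigma_m(x)$, and an explicit $m$-term approximant showing $\sigma_m(x)$ is small, so that $\cL=1+2(1+\tfrac1t)m$ exactly. Without producing such an example (and verifying that the Chebyshev minimization over the forced greedy set cannot reduce the error), the sharpness claim remains unproved in your write-up.
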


The second result is a slight generalization of \cite[Theorem 4.1]{DKO},
and gives a correct version of  \cite[Theorem 3.5]{SY}.

\begin{theorem}\label{main1}
Let $\mathcal B$ be an M-basis in $\mathbb X$. Then, for all $m\geq 1$ and $t\in (0,1]$,
\Be\cL \;\leq \;g_{2m}^c\,+\,\frac2t\,\min\big\{\,\tg_m\tmu_m\,,\;\ga_{2m}\tg_{2m}\tmu^d_m\,
\big\}.\label{Lmain}\Ee
\end{theorem}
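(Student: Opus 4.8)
The plan is to follow and refine the scheme of \cite[Theorem 4.1]{DKO}. Fix $x\in\SX$, $t\in(0,1]$, and a Chebyshev $t$‑greedy operator $\ch_m^t$; write $A=\supp\ch_m^t(x)$, so that $|A|=m$ and $A\in G(x,m,t)$, and put $\alpha:=\min_{n\in A}|\ben(x)|$, which by \eqref{tineq} satisfies $|\ben(x)|\le\alpha/t$ for every $n\notin A$. For a prescribed level of accuracy, pick $z=\sum_{n\in B}b_n\be_n$ with $|B|\le m$ and $\|x-z\|$ as close as we wish to $\sigma_m(x)$. Everything rests on the optimality of the Chebyshev projection, $\|x-\ch_m^t(x)\|\le\|x-\sum_{n\in A}a_n\be_n\|$ for \emph{every} choice of scalars $(a_n)_{n\in A}$: the idea is to feed into this inequality a $w=\sum_{n\in A}a_n\be_n$ built from $z$ on $A\cap B$ and from a level‑$\alpha$ truncation of $x$ on $A\setminus B$, so that the resulting error splits as a co‑greedy residual of order $\le 2m$ of $x-z$ — which produces the term $g_{2m}^c$ — plus a remainder supported on boundedly many coordinates and controlled by $\alpha$ through \eqref{tineq}, which produces the democratic terms.

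Concretely, taking $a_n=b_n$ for $n\in A\cap B$ and $a_n=\ben(x)$ for $n\in A\setminus B$, an elementary computation (using $\supp z\subseteq B$) yields
\[
\|x-\ch_m^t(x)\|\ \le\ \big\|(I-P_{A\setminus B})(x-z)+P_{B\setminus A}(z)\big\|.
\]
One then reorganizes the right‑hand side: the coefficients of $x-z$ on $A\setminus B$ coincide with those of $x$ there, hence have modulus $\ge\alpha$, while off $A$ every coefficient has modulus $\le\alpha/t$. Writing $\e_n=\sgn(\ben(x))$, one rewrites
\[
(I-P_{A\setminus B})(x-z)+P_{B\setminus A}(z)\ =\ (I-\cG)(x-z)\ +\ R,
\]
where $\cG\in\SG_k$ for some $k\le 2m$ is a greedy operator of $x-z$ whose support is a (full‑weakness) greedy set containing the bulk of $A\setminus B$, and $R$ is supported on at most $2m$ coordinates and, in the truncation sense, is dominated by a fixed multiple of $\alpha\,\bone_{\e F}$ with $|F|\le m$. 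Arranging that $\cG$ has order at most $2m$ and lies in $\SG_k$ — so that $\|(I-\cG)(x-z)\|\le g_{2m}^c\,\|x-z\|$ — while only \emph{greedy}, not coordinate, projections of $x-z$ enter $R$, is the delicate part.

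It then remains to estimate $\|R\|$, and this is where the two terms of the minimum come from. A truncation estimate for M‑bases, together with \eqref{tineq}, controls $\|R\|$ by $\tfrac1t$ times $\tg$ of the appropriate order times $\alpha\|\bone_{\e F}\|$; moreover $\alpha\|\bone_{\e(A\setminus B)}\|$ is at most a constant multiple of $\tg_m\,\|x-z\|$ (in the main regime $A\setminus B$ is itself a $t$‑greedy set for $x-z$, and its coefficients there have modulus $\ge\alpha$ with signs $\e$). For the first bound one transfers $\bone_{\e F}$ directly onto $\bone_{\e(A\setminus B)}$ via super‑democracy $\tmu_m$, obtaining $\|R\|\le\tfrac2t\,\tg_m\tmu_m\,\|x-z\|$. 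For the second — the branch that may be far smaller — one splits $F=(F\cap A)\cupdot(F\setminus A)$ into pieces of size $\le m$, uses $\ga_{2m}$ to pass from each indicator sum to one supported on a set disjoint from $A$, applies the disjoint super‑democracy $\tmu_m^d$, and finally returns to $A\setminus B$; this costs the extra factor $\ga_{2m}$ and forces the larger order in $\tg_{2m}$, giving $\|R\|\le\tfrac2t\,\ga_{2m}\tg_{2m}\tmu_m^d\,\|x-z\|$. Combining with the co‑greedy estimate, letting $\|x-z\|$ tend to $\sigma_m(x)$, and taking the smaller of the two bounds yields \eqref{Lmain}.

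The main obstacle is the reorganization in the second step: one must choose $\cG$ and its support so that (a) its order is at most $2m$; (b) it is a genuine full‑weakness greedy operator of $x-z$, so that only $g_{2m}^c$, and not its weak analogue, appears; and (c) the coordinates of $x-z$ that $\cG$ leaves in place — in particular those of $A\setminus B$ of size $\ge\alpha$ and the off‑$A$ coordinates of size up to $\alpha/t$ thrown up by the weak selection — all land in $R$ in a shape amenable to the truncation and super‑democracy estimates with the sharp constant $\tfrac2t$. A separate, easier argument is needed in the degenerate regime where $\alpha$ is comparable to, or smaller than, $\sigma_m(x)$ and the large/small dichotomy for the coefficients of $x-z$ collapses: there one bounds $\|x-\ch_m^t(x)\|$ directly by $\|x-z\|$ plus a single truncation term. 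Keeping every constant sharp, so that precisely $g_{2m}^c$ and $\tfrac2t$ emerge rather than weaker substitutes, is the crux.
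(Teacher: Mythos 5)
There is a genuine gap, and you flag it yourself: the whole argument hinges on rewriting
$(I-P_{A\setminus B})(x-z)+P_{B\setminus A}(z)$ as $(I-\cG)(x-z)+R$ with $\cG$ a \emph{full-weakness} greedy operator for $x-z$ of order $\leq 2m$ and $R$ a small, truncation-shaped remainder, and you never construct such a $\cG$. As stated this step is doubtful: $A\setminus B$ is only a $t$-greedy set for $x-z$ (off $A$ the coefficients of $x-z$ coincide with those of $x$ and may be as large as $\alpha/t$), so the natural candidates for $\cG$ either force the weak parameter into the quasi-greedy constant or leave behind the piece $\al\,\bone_{\e\La_\al(x-z)}$, which then has to be estimated separately and produces an extra additive term rather than the clean bound \eqref{Lmain}. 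In other words, the decomposition you call ``the delicate part'' is precisely the missing idea, not a technicality.

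The paper avoids this issue by a different choice of the Chebyshev competitor: with $\al=\max_{n\notin A}|\be^*_n(x)|$ it takes $p=P_A(x)-P_A\big(T_\al(x-z)\big)$, so that
\[
x-p \;=\; P_{B\setminus A}\big(x-T_\al(x-z)\big)\,+\,T_\al(x-z),
\]
and the second summand is handled \emph{directly} by the truncation lemma (Lemma \ref{trun1}, i.e.\ $\|T_\al(y)\|\leq g^c_{|\La_\al(y)|}\|y\|$), using only $\La_\al(x-z)\subset A\cup B$; no greedy operator acting on $x-z$ is ever needed, which is exactly what lets $g^c_{2m}$ appear with no loss. The first summand has coefficients of modulus $\leq 2\al\leq\frac2t\min_{n\in A\setminus B}|\be^*_n(x-z)|$, and is then bounded via convexity (Lemma \ref{conv}) followed by either Lemma \ref{propc} applied to a genuine greedy set $\Ga\in G(x-z,|A\setminus B|,1)$ together with $\tmu_m$ (first branch), or Lemma \ref{propc1} applied to $A\setminus B$ itself together with $\tmu^d_m$, which is where $\ga_{2m}\tg_{2m}$ comes from (second branch). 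Your outline of the two branches is in the right spirit, but your second branch (splitting a set $F$ and ``passing through $\ga_{2m}$ to a disjoint set'') does not match any proved lemma and is left unexecuted; also no case distinction on a ``degenerate regime'' is needed once the competitor $p$ is chosen as above. To repair your argument, replace your choice $a_n=\be^*_n(x)$ on $A\setminus B$ by the truncation-based $p$ above and invoke Lemmas \ref{trun1}, \ref{conv}, \ref{propc} and \ref{propc1}.
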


Our next result concerns lower bounds for $\cL$, for which we need to introduce weaker versions of the democracy parameters with an additional  separation condition. For two finite sets $A,B\subset\SN$ and $c\geq1$, the notation $A>cB$ will stand for $\min A >c\max B$.

\Bi\item Given an integer $c\geq2$, we define
\begin{equation} \label{vmc}
\vartheta_{m,c} := \sup\left\lbrace \dfrac{\Vert \bone_{\varepsilon A}\Vert}{\Vert \bone_{\eta B}\Vert}\mid 
\vert \varepsilon\vert=\vert\eta\vert=1,\;|A|=|B|\leq m\;\mbox{ with $A>cB$ or $B>cA$} \right\rbrace.
\end{equation}
\Ei

\begin{theorem}\label{th3}
If $\cB$ is a Ces\`aro basis in $\SX$ with constant $\beta$, then for every $c\geq 2$
$$\cL\geq \frac{1}{t\beta^2}\frac{c-1}{c+1}\vartheta_{m,c},\quad \forall\;m\in\SN,\;t\in(0,1].$$
\end{theorem}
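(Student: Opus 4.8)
The plan is the classical one for lower estimates of Lebesgue parameters: I would start from a nearly extremal pair for $\vartheta_{m,c}$ and manufacture a vector $x$ together with an admissible Chebyshev $t$-greedy operator whose error $\|x-\ch_m^t(x)\|$ is large compared with $\sigma_m(x)$.

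Concretely, fix $\delta>0$ and choose $A,B$ with $|A|=|B|=s\le m$ and unimodular scalars $\varepsilon,\eta$ so that $A>cB$ (the case $B>cA$ being symmetric) and $\|\bone_{\varepsilon A}\|/\|\bone_{\eta B}\|>\vartheta_{m,c}-\delta$. Adjoin a set $C$ of $m-s$ indices with $\min C>c\max A$ --- such indices always exist --- with unimodular scalars $\zeta$, and put $x:=\bone_{\varepsilon A}+t\bone_{\eta B}+t\bone_{\zeta C}$. Since $|\ben(x)|=1$ on $A$ and $|\ben(x)|=t$ on $B\cup C$, the $m$-element set $B\cup C$ lies in $G(x,m,t)$: indeed $\min_{n\in B\cup C}|\ben(x)|=t=t\cdot 1=t\max_{n\notin B\cup C}|\ben(x)|$. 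Choose $\ch_m^t$ so that $\ch_m^t(x)$ is a best approximation of $x$ from $\span\{\be_n:n\in B\cup C\}$ --- a legitimate Chebyshev $t$-greedy operator of order $m$, since $B\cup C\in G(x,m,t)$. As $t\bone_{\eta B}+t\bone_{\zeta C}\in\span\{\be_n:n\in B\cup C\}$, we get
\[
\|x-\ch_m^t(x)\|=\min\{\,\|\bone_{\varepsilon A}+v\|\mid v\in\span\{\be_n:n\in B\cup C\}\,\},
\]
while approximating $x$ by $\bone_{\varepsilon A}+t\bone_{\zeta C}$ (supported on the $m$-element set $A\cup C$) yields $\sigma_m(x)\le t\|\bone_{\eta B}\|$. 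Hence $\cL\ge\|x-\ch_m^t(x)\|/(t\|\bone_{\eta B}\|)$, and everything reduces to showing
\[
\|\bone_{\varepsilon A}+v\|\ge\frac1{\beta^2}\,\frac{c-1}{c+1}\,\|\bone_{\varepsilon A}\|\qquad\text{for all }v\in\span\{\be_n:n\in B\cup C\}.
\]

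This is the crux, and here the Ces\`aro structure enters. Each $F_N$ is diagonal in $\cB$ (with $F_N\be_n=\tfrac{N-n+1}{N}\be_n$ for $n\le N$ and $F_N\be_n=0$ for $n>N$), and $\|F_N\|,\|I-F_N\|\le\beta$. The elementary fact to use is
\[
F_N u-\tfrac LN F_L u=\bigl(1-\tfrac LN\bigr)u\qquad\text{whenever }\supp u\subseteq[1,L]\text{ and }N\ge L,
\]
read off directly from the eigenvalues. Write $L=\max B$, $M=\min A$, $M'=\max A$, $q=\min C$, and split $v=v_B+v_C$ with $\supp v_B\subseteq B\subseteq[1,L]$ and $\supp v_C\subseteq C\subseteq[q,\infty)$. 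The operator $T_1:=(I-F_{M-1})-\tfrac{L}{M-1}(I-F_L)$ annihilates everything supported in $[1,L]$ and acts as multiplication by $1-\tfrac L{M-1}$ on everything supported in $[M,\infty)$ (apply the identity above, via $I-F_N$); since $\bone_{\varepsilon A}$ and $v_C$ live in $[M,\infty)$ and $v_B$ in $[1,L]$,
\[
T_1(\bone_{\varepsilon A}+v)=\bigl(1-\tfrac L{M-1}\bigr)(\bone_{\varepsilon A}+v_C),\qquad\|T_1\|\le\beta\bigl(1+\tfrac L{M-1}\bigr).
\]
Likewise $T_2:=F_{q-1}-\tfrac{M'}{q-1}F_{M'}$ annihilates everything supported in $[q,\infty)$ and is multiplication by $1-\tfrac{M'}{q-1}$ on everything supported in $[1,M']$, so $T_2(\bone_{\varepsilon A}+v_C)=(1-\tfrac{M'}{q-1})\bone_{\varepsilon A}$ with $\|T_2\|\le\beta(1+\tfrac{M'}{q-1})$. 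Composing,
\[
\|\bone_{\varepsilon A}+v\|\ge\frac{\|T_2T_1(\bone_{\varepsilon A}+v)\|}{\|T_2\|\,\|T_1\|}\ge\frac1{\beta^2}\cdot\frac{(M-1)-L}{(M-1)+L}\cdot\frac{(q-1)-M'}{(q-1)+M'}\,\|\bone_{\varepsilon A}\|.
\]
Since $M>cL$ gives $M-1\ge cL$ and $x\mapsto\tfrac{x-L}{x+L}$ is increasing, the first fraction is $\ge\tfrac{c-1}{c+1}$; choosing $q=\min C$ large makes the second fraction exceed $1-\delta$. (If $s=m$ there is no block $C$, only $T_1$ is used, and the factor improves to $\beta$.) This gives the required inequality up to the harmless $1-\delta$.

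Putting the pieces together, $\cL>\dfrac{1-\delta}{t\beta^2}\,\dfrac{c-1}{c+1}\cdot\dfrac{\|\bone_{\varepsilon A}\|}{\|\bone_{\eta B}\|}>\dfrac{1-\delta}{t\beta^2}\,\dfrac{c-1}{c+1}(\vartheta_{m,c}-\delta)$, and $\delta\to0^+$ finishes the case $A>cB$. For $B>cA$ one places $C$ above $B$; then the single operator $F_{\min B-1}-\tfrac{\max A}{\min B-1}F_{\max A}$ already kills $[\min B,\infty)\supseteq B\cup C$ and fixes a positive multiple of $\bone_{\varepsilon A}$, so only a factor $\beta$ is lost, and since $\beta\ge\|F_1\|=\|\be_1\|\,\|\be^*_1\|\ge1$ the bound with $\beta^2$ holds here too. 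Taking the supremum over admissible pairs gives $\cL\ge\tfrac1{t\beta^2}\tfrac{c-1}{c+1}\vartheta_{m,c}$. I expect the delicate step to be the choice of $T_1$ (and $T_2$ when padding is needed): one has to use the separation $\min A>c\max B$ to turn the tail $[M,\infty)$ into an eigenspace of $T_1$ at the controlled cost $\tfrac L{M-1}\le\tfrac1c$, and then dispose of the padding block with a second Ces\`aro-type operator --- the factor $\beta^2$ being exactly the price of this two-step procedure when $|B|<m$ and there is no room to enlarge $B$ inside $(\max B,\tfrac1c\min A)$.
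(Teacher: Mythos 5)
Your argument is correct and takes essentially the same route as the paper: you use the same test vectors $x=\bone_{\e A}+t\bone_{\eta B}+t\bone_{C}$ (with a far-out padding block $C$ and the approximant $\bone_{\e A}+t\bone_{C}$ giving $\sigma_m(x)\le t\|\bone_{\eta B}\|$), the same two-case split according to whether the numerator set lies above or below, and your operators $T_1,T_2$ are, up to the scalar factors $1-\tfrac{L}{M-1}$ and $1-\tfrac{M'}{q-1}$, exactly the de la Vall\'ee-Poussin operators $V_{N,M}=\tfrac{M}{M-N}F_M-\tfrac{N}{M-N}F_N$ and $I-V_{N,M}$ that the paper composes, with your "$q=\min C$ large" playing the role of the paper's $\la\to\infty$. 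The differences (order of the two operators, normalization, and the explicit remark that $\beta\ge1$ absorbs the one-operator case into the $\beta^2$ bound) are cosmetic, so nothing further is needed.
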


We shall also establish, in Theorem
\ref{Thm_thetam} below, a similar lower bound valid for more general M-bases (not necessarily of Ces\`aro type),
in terms of a new parameter $\theta_m$ which is invariant under rearrangements of $\cB$.  

\begin{Remark}
{\rm One may compare the bounds for $\cLL$ above with those  for $\bL$ given in \cite{BBG}
\[
(1)\;\bL\leq 1+3\frak{K}m,\qquad (2)\;
\bL\leq k^c_{2m}+\tg_m\tmu_m,\quad\mand\;(3)\;
\bL\geq \tmu_m^d,
\]
which illustrate a slightly better behavior of the Chebishev TGA.
Observe that one also has the trivial inequalities
\[
\cL\leq \bL^t\leq k^c_m\,\cL.
\]
Indeed, $\cL \leq \bL^t$ is direct by definition, while $\bL^t \leq k_m^c \cL$ can be proved as follows: take $x\in\mathbb X$ and $A=\supp \cG^t_m(x)$. Pick a Chebyshev greedy operator $\ch^t_m$ such that $\supp \ch^t_m (x)=A$. Then
\[
\Vert x-\mathcal{G}_m^t(x)\Vert = \|(I-P_A)x\|=\Vert (I-P_A)(x-\mathfrak{CG}_m^t(x))\Vert \leq k_m^c\Vert x-\mathfrak{CG}_m^t(x)\Vert,
\]
so $\bL^t\leq k_m^c \cL$. Hence, when $\mathcal B$ is unconditional
then $\bL^t \approx \cL$. However for all conditional quasi-greedy and democratic bases we have $\cLL=O(1)$, but $\bL\to\infty$.}
\end{Remark}

The paper is organized as follows. Section \ref{previous} is devoted to preliminary lemmas.  In Section \ref{proof} we prove Theorems \ref{main2},   \ref{main1} and \ref{th3}, and also establish the more general lower bound in Theorem \ref{Thm_thetam}, giving various situations in which it applies.
Section \ref{examples} is devoted to examples illustrating  the optimality of the results; in particular, an optimal bound of $\cLL$ for the trigonometric system in $L^1(\ST)$, settling a question left open in \cite{SY}.
In Section \ref{s:comparison_mu_mu_d} we investigate the equivalence between $\tmu^d_m$ and $\tmu_m$
and show Theorem \ref{th_mudN}. Finally, in Section 6 we study the convergence 
of $\ch_m (x)$ and $\cG_m(x)$ to $x$ under the \emph{strong} M-basis assumption, settling a gap in \cite{SY,Wo}.

\section{Preliminary results}\label{previous}

We recall some basic concepts and results that will be used later in the paper; see \cite{DKK,BBG}.
For each $\al>0$  we define the $\al$-truncation of a scalar $y\in\mathbb K$  as

$$T_\al(y)=\al\, \sgn y\; \mbox{ if }\, \vert y\vert \geq \al,\mand T_\al(y)=y\; \mbox{ if }\, \vert y\vert\leq\al.$$
We extend $T_\al$ to an operator in $\mathbb X$ by formally assigning $T_\al(x)\sim\sum_{n=1}^\infty T_\al(\ben(x))\be_n$, that is
$$T_\al(x):=
 \al \bone_{\varepsilon \Lambda_\al(x)}+(I-P_{\Lambda_\al(x)})(x),$$
where $\Lambda_\al(x)=\lbrace n : \vert\ben(x)\vert>\al\rbrace$ and $\varepsilon=\lbrace\sgn(\ben(x))\rbrace$. Of course, this operator is well defined since $\Lambda_\al(x)$ is a finite set.
In \cite{BBG} we can find the following result:
\begin{lemma}{\cite[Lemma 2.5]{BBG}}\label{trun1}
For all $\al>0$ and $x\in\mathbb X$, we have
$$\Vert T_\al(x)\Vert \leq g_{\vert\Lambda_\al(x)\vert}^c\Vert x\Vert.$$
\end{lemma}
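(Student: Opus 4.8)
The plan is to realise the truncated vector $T_\al(x)$ as a \emph{convex combination} of vectors of the form $(I-\cG_j)(x)$, where each $\cG_j$ is a $1$-greedy operator of order $j$ and $0\le j\le m:=|\Lambda_\al(x)|$; granting such a representation, the estimate is immediate from the definition of $g_m^c$ and convexity.

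In detail, the case $m=0$ is trivial (then $T_\al(x)=x$ and $g_0^c=\|I\|=1$), so assume $m\ge1$ and write $\Lambda=\Lambda_\al(x)$. Enumerate $\Lambda=\{n_1,\dots,n_m\}$ so that $|\be^*_{n_1}(x)|\ge\cdots\ge|\be^*_{n_m}(x)|>\al$, and set $A_j=\{n_1,\dots,n_j\}$ for $0\le j\le m$, so $A_0=\emptyset$ and $A_m=\Lambda$. Since every index outside $\Lambda$ carries a coefficient of modulus $\le\al$, each $A_j$ belongs to $G(x,j,1)$; hence one can choose a greedy operator $\cG_j\in\SG_j$ with $\cG_j(x)=P_{A_j}(x)$, and then $\|(I-P_{A_j})(x)\|=\|(I-\cG_j)(x)\|\le g_j^c\|x\|\le g_m^c\|x\|$, using that $g_k^c$ is non-decreasing in $k$ and $g_0^c=\|I\|=1$.

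It then remains to produce weights $w_0,\dots,w_m\ge0$ with $\sum_{j=0}^m w_j=1$ such that $T_\al(x)=\sum_{j=0}^m w_j\,(I-P_{A_j})(x)$. Comparing the $\be^*_n$-coefficients of the two sides (the right-hand side being a finite sum, so $\be^*_n$ passes through): for $n\notin\Lambda$ both equal $\ben(x)$, while for $n=n_j$ the right side equals $\big(\sum_{i<j}w_i\big)\be^*_{n_j}(x)$ and the left side equals $\al\,\sgn(\be^*_{n_j}(x))=\tfrac{\al}{|\be^*_{n_j}(x)|}\,\be^*_{n_j}(x)$. Thus it suffices to solve $\sum_{i<j}w_i=\la_j:=\al/|\be^*_{n_j}(x)|$ for $j=1,\dots,m$ together with $\sum_{i=0}^m w_i=1$, and since $0<\la_1\le\cdots\le\la_m<1$ the choice $w_0=\la_1$, $w_j=\la_{j+1}-\la_j$ $(1\le j\le m-1)$, $w_m=1-\la_m$ does this and is a probability vector. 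As $\{\ben\}_n$ separates points of $\mathbb X$ (a consequence of (d)), the coefficientwise identity upgrades to a vector identity, and the triangle inequality gives $\|T_\al(x)\|\le\sum_j w_j\|(I-P_{A_j})(x)\|\le g_m^c\|x\|$. For complex scalars the same computation applies with $\sgn y=y/|y|$.

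The only step that takes an idea is the convex-combination representation of $T_\al(x)$ over a nested chain of greedy sets; once it is in hand, checking that the $A_j$ are greedy sets, that the $w_j$ form a probability vector, and that both sides share the same coefficients are short routine verifications. A natural but unsuccessful alternative would be to bound $(I-P_\Lambda)(x)$ and $\al\bone_{\e\Lambda}$ separately --- the first is easy, but the second does not seem to admit a clean $g_m^c$-bound on its own, which is exactly why the combined decomposition is needed.
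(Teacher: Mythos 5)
Your argument is correct: the nested greedy sets $A_j$, the weights $w_j$, and the coefficientwise verification (plus totality of the dual system to upgrade it to a vector identity) all check out, with the harmless convention $g_0^c=1$ coming from the zero operator of order $0$. The paper does not reprove this lemma but cites \cite[Lemma 2.5]{BBG}, whose proof is exactly this convexity (Abel-summation) representation of $T_\al(x)$ as a convex combination of the tails $(I-P_{A_j})(x)$ over a nested chain of greedy sets, so your approach is essentially the same as the one behind the cited result.
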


We also need a well known property from \cite{DKK,DKKT}, formulated as follows.
\begin{lemma}{\cite[Lemma 2.3]{BBG}}\label{propc}
If $x\in\mathbb X$ and $\varepsilon=\lbrace\sgn(\ben(x))\rbrace$, then
\Be\min_{n\in G}\vert\ben(x)\vert \Vert\bone_{\varepsilon G}\Vert \leq \tg_{\vert G\vert} \Vert x\Vert,\quad \forall G\in G(x,m,1).\label{propG}\Ee
\end{lemma}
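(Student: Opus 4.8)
The plan is to exhibit $\alpha\,\bone_{\varepsilon G}$, with $\alpha:=\min_{n\in G}|\ben(x)|$, as a \emph{convex combination} of vectors of the form $\cG(x)-\cG'(x)$, where $\cG$ is a $1$-greedy operator of order $m=|G|$ and $\cG'<\cG$. Granting this, the inequality follows at once: the norm of a convex combination is at most the largest norm of its terms, and every such term is bounded by $\tg_m\|x\|$ by the very definition of $\tg_m$.

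One may assume $\alpha>0$, as otherwise the left-hand side vanishes. Set $\varepsilon_n=\sgn\ben(x)$ and list the distinct values of $|\ben(x)|$ for $n\in G$ as $\alpha=\alpha_0<\alpha_1<\dots<\alpha_r$. For $0\le j\le r$ put $G_j=\{n\in G:|\ben(x)|\ge\alpha_j\}$ and $G_{r+1}=\emptyset$, so that $G=G_0\supsetneq G_1\supsetneq\dots\supsetneq G_{r+1}=\emptyset$. Since $G\in G(x,m,1)$, the greedy inequality \eqref{tineq} gives $|\ben(x)|\le\alpha_0$ for $n\notin G$; hence for each $j$ one has $\min_{n\in G_j}|\ben(x)|=\alpha_j\ge\max_{n\notin G_j}|\ben(x)|$, i.e.\ $G_j$ is a $1$-greedy set of order $|G_j|$ for $x$. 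On the layer $G_j\setminus G_{j+1}$ every coefficient has modulus exactly $\alpha_j$, so $P_{G_j}(x)-P_{G_{j+1}}(x)=\alpha_j\,\bone_{\varepsilon(G_j\setminus G_{j+1})}$.

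The algebraic core is a summation by parts. Putting $t_j:=\alpha/\alpha_j$, so $1=t_0\ge t_1\ge\dots\ge t_r>0$, the layer decomposition reads $\alpha\,\bone_{\varepsilon G}=\sum_{j=0}^{r}t_j\big(P_{G_j}(x)-P_{G_{j+1}}(x)\big)$, and, using $P_{G_{r+1}}(x)=0$ and $t_0=1$, a rearrangement converts this into
\[
\alpha\,\bone_{\varepsilon G}\;=\;t_r\,P_G(x)\;+\;\sum_{j=1}^{r}(t_{j-1}-t_j)\,\big(P_G(x)-P_{G_j}(x)\big).
\]
The weights $d_0:=t_r$ and $d_j:=t_{j-1}-t_j$ ($1\le j\le r$) are nonnegative and sum to $t_0=1$. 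Now choose a $1$-greedy operator $\cG$ of order $m$ with $A(x,\cG)=G$ and, for each $j$, a $1$-greedy operator $\cG_j$ of order $|G_j|$ which at every point picks, among the coordinates selected by $\cG$, the $|G_j|$ of largest modulus (ties broken consistently). Then $\cG_j<\cG$, while $A(x,\cG_j)=G_j$, so $P_G(x)=\cG(x)-\cG_0(x)$ (with $\cG_0$ the null operator of order $0$) and $P_G(x)-P_{G_j}(x)=\cG(x)-\cG_j(x)$. Each of the $r+1$ vectors in the displayed combination has norm $\le\tg_m\|x\|$, and convexity gives $\alpha\|\bone_{\varepsilon G}\|\le\tg_m\|x\|=\tg_{|G|}\|x\|$.

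There is no essential difficulty; the only point needing a little care is to make the choices of $\cG$ and of the operators $\cG_j$ mutually compatible — so that $\cG_j<\cG$ holds at \emph{every} point while $A(x,\cG_j)=G_j$ at the distinguished point $x$ — which is routine once one orders each greedy set by decreasing modulus of coefficients. One should also check the index bookkeeping in the summation by parts and the nonnegativity of the weights, the latter relying on the $\alpha_j$ being strictly increasing.
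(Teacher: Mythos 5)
Your proof is correct, and it is essentially the argument behind the cited result \cite[Lemma 2.3]{BBG} (which this paper quotes without reproving): an Abel summation over the greedy layers of $G$, expressing $\alpha\,\bone_{\varepsilon G}$ as a convex combination of differences $\cG(x)-\cG_j(x)$ with $\cG_j<\cG$, each controlled by $\tg_{|G|}\|x\|$. The details you flag (nonnegativity of the weights, global compatibility $\cG_j<\cG$ via consistent tie-breaking, and $\|P_G(x)\|\le g_m\|x\|\le\tg_m\|x\|$ for the $d_0$-term) all check out.
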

The following version of \eqref{propG}, valid even if $G$ is not greedy, improves \cite[Lemma 2.2]{DKO}.

\begin{lemma}\label{propc1}
Let $x\in\mathbb X$ and $\varepsilon=\lbrace\sgn(\ben(x))\rbrace$. For every set finite $A\subset\SN$, if $\al=\min_{n\in A}|\be^*_n(x)|$, then
\Be\al \Vert\bone_{\varepsilon A}\Vert \,\leq \,\ga_{|A\cup\La_\al(x)|}\,\tg_{|A\cup\La_\al(x)|} \Vert x\Vert,\label{propA}\Ee
where $\Lambda_\al(x)=\lbrace n : \vert\ben(x)\vert>\al\rbrace$.
\end{lemma}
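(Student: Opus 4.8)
The plan is to reduce the statement to the greedy-set case already handled by Lemma~\ref{propc}. The key observation is that, although $A$ itself need not be a greedy set for $x$ (the inequality \eqref{tineq} only forces $|\ben(x)|\ge\al$ on $A$, and there may be ties at the level $\al$ outside $A$), the enlarged set $G:=A\cup\Lambda_\al(x)$ always \emph{is} a $1$-greedy set for $x$ of order $|G|$.

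First I would dispose of the trivial case $\al=0$, where $\al\,\Vert\bone_{\varepsilon A}\Vert=0$ and there is nothing to prove; so we may assume $\al>0$, which in particular makes $\Lambda_\al(x)$, and hence $G$, a finite set (recall $\ben(x)\to 0$). Next I would verify that $G\in G(x,|G|,1)$: if $n\notin G$ then $n\notin\Lambda_\al(x)$, so $|\ben(x)|\le\al$; while if $n\in G$ then either $n\in\Lambda_\al(x)$ and $|\ben(x)|>\al$, or $n\in A$ and $|\ben(x)|\ge\al$ by the very definition of $\al$. Hence $\min_{n\in G}|\ben(x)|\ge\al\ge\max_{n\notin G}|\ben(x)|$, which is exactly the greedy condition with $t=1$.

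Now I would apply Lemma~\ref{propc} to the set $G$: since $\al\le\min_{n\in G}|\ben(x)|$, it yields $\al\,\Vert\bone_{\varepsilon G}\Vert\le\tg_{|G|}\Vert x\Vert$. Finally, since $A\subset G$ and $\varepsilon=\{\sgn(\ben(x))\}$ is the common sign pattern, the definition of the constant-coefficient parameter $\ga_m$ gives $\Vert\bone_{\varepsilon A}\Vert\le\ga_{|G|}\Vert\bone_{\varepsilon G}\Vert$. Chaining the two inequalities and recalling that $|G|=|A\cup\Lambda_\al(x)|$ produces \eqref{propA}.

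There is no real obstacle here: the only point requiring care is that $A$ is not assumed greedy, so one must pass to $G=A\cup\Lambda_\al(x)$ instead of working with $A$ directly, and keep track of the cardinality $|A\cup\Lambda_\al(x)|$ (rather than $|A|$) in the subscripts of $\ga$ and $\tg$. The gain over \cite[Lemma 2.2]{DKO} comes precisely from this sharper bookkeeping, in which the passage from $\bone_{\varepsilon G}$ back to $\bone_{\varepsilon A}$ costs only the factor $\ga_{|G|}$ rather than a full quasi-greedy constant.
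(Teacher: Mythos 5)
Your proof is correct and follows essentially the same route as the paper: pass to the enlarged set $G=A\cup\Lambda_\al(x)$, note it is a greedy set, bound $\Vert\bone_{\varepsilon A}\Vert\le\ga_{|G|}\Vert\bone_{\varepsilon G}\Vert$, and then apply Lemma~\ref{propc}. The extra details you supply (the trivial case $\al=0$ and the explicit verification that $G$ is greedy) are fine but not a different argument.
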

\begin{proof}
Call $G=A\cup\La_\al(x)$, and notice that it is a greedy set for $x$. Then,
\[
\al \Vert\bone_{\varepsilon A}\Vert \,\leq\,\al\,\ga_{|G|}\|\bone_{\e G}\|\,\leq\,\ga_{|G|}\,\tg_{|G|}\,
\|x\|,\]
using \eqref{propG} in the last step.
\end{proof}

\begin{Remark} \label{propR1}{\rm The following is a variant of \eqref{propA} with a different constant \Be
\min_{n\in A}|\be^*_n(x)|\; \Vert\bone_{\varepsilon A}\Vert \,\leq \,k_{|A|}\, \Vert x\Vert.
\label{propR}\Ee
A similar proof as the one in Lemma \ref{propc1} can be seen in \cite[Proposition 2.5]{BB1}.}
\end{Remark}

Finally, using convexity as in \cite[Lemma 2.7]{BBG}, one has the elementary lemma.
\begin{lemma}\label{conv}
For all finite sets $A\subset\SN$ and scalars $a_n\in\SK$ it holds
\[
\Big\|\sum_{n\in A} a_n \be_n\Big\|\leq \,\max_{n\in A}|a_n|\,\sup_{|\e|=1}\big\|\bone_{\e A}\big\|.
\]
\end{lemma}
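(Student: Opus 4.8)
The plan is to reduce the statement to the triangle inequality after a normalization. First I would write $M=\max_{n\in A}|a_n|$; if $M=0$ there is nothing to prove, so assume $M>0$. Then $b_n:=a_n/M$ satisfies $|b_n|\le1$ for all $n\in A$, and it suffices to show $\bignorm{\sum_{n\in A}b_n\be_n}\le\sup_{|\e|=1}\norm{\bone_{\e A}}$, since multiplying back by $M$ gives the claim.

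The key observation is that the closed unit ball of $\SK^{|A|}$ (equivalently, the polydisc $\{(z_n)_{n\in A}:|z_n|\le1\}$) is the closed convex hull of its ``extreme'' points, namely the vectors $(\e_n)_{n\in A}$ with $|\e_n|=1$ for all $n$; in the real case these are the $\pm1$ vectors, in the complex case one uses that $\overline{D}=\overline{\co}\,\partial D$ coordinatewise. Hence $(b_n)_{n\in A}$ lies in the closed convex hull of $\{(\e_n)_{n\in A}:\e\in\Upsilon\}$, and therefore $(b_n)$ can be approximated (or, by Carath\'eodory, written exactly as a finite convex combination in the real case) by points $\sum_j \la_j \e^{(j)}$ with $\la_j\ge0$, $\sum_j\la_j=1$, $\e^{(j)}\in\Upsilon$. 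Applying the linear map $(c_n)\mapsto\sum_{n\in A}c_n\be_n$ and then the norm, convexity of $\norm\cdot$ gives
\[
\Bignorm{\sum_{n\in A}b_n\be_n}\;\le\;\sum_j\la_j\,\bignorm{\bone_{\e^{(j)}A}}\;\le\;\sup_{|\e|=1}\bignorm{\bone_{\e A}},
\]
and passing to the limit (if needed) preserves the inequality since the right-hand side is a fixed finite supremum. Multiplying by $M$ completes the proof.

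The only mild subtlety — and the place I would be careful — is the complex case: one must justify that a point of the closed polydisc is a limit of convex combinations of points of the distinguished boundary (torus) $\partial D^{|A|}$. This follows from the one-dimensional fact that any $z$ with $|z|\le1$ is a convex combination of two unimodular numbers (e.g. $z=\la w_1+(1-\la)w_2$ with $w_1,w_2$ on the unit circle on the line through $z$), applied coordinate-by-coordinate and combined with the observation that a product of simplices maps onto the polydisc. Since this is exactly the argument indicated in \cite[Lemma 2.7]{BBG}, I would simply cite it; no new estimate is required.
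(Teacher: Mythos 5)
Your argument is correct and is essentially the proof the paper has in mind: the statement is justified there simply by ``using convexity as in \cite[Lemma 2.7]{BBG}'', which is precisely your reduction of the coefficient vector to a convex combination of unimodular sign patterns followed by the triangle inequality. Note only that in the complex case no limiting procedure is needed, since each coordinate is an exact convex combination of two unimodular numbers and the product-of-simplices construction already yields a finite exact convex combination of points of $\Upsilon$.
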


\section{Proof of the main results}\label{proof}

\subsection{Proof of Theorem \ref{main2}}  Let $x\in\mathbb X$ and $\cht_m\in \cGtm$ be a fixed Chebyshev $t$-greedy operator, and denote by $A=\supp\cht_mx\in G(x,m,t)$. Pick any $z=\sum_{n\in B}b_n\be_n$ such that $\vert B\vert = m$. By definition of the Chebyshev operators,
\begin{eqnarray*}
\Vert x-\mathfrak{CG}_m^t(x)\Vert \leq \Vert x-P_{A\cap B}(x)\Vert\leq \Vert P_{B\setminus A}(x)\Vert + \Vert x-P_B(x)\Vert.
\end{eqnarray*}
On the one hand, using \eqref{tineq},
$$\Vert P_{B\setminus A}(x)\Vert \leq \sup_n\Vert \be_n\Vert\sum_{j\in B\setminus A}\vert \mathbf{e}_j^*(x)\vert\leq \frac{1}{t}\sup_n\Vert\be_n\Vert \sum_{j\in A\setminus B}\vert \mathbf{e}_j^*(x-z)\vert \leq \frac{1}{t}\mathfrak{K}m\Vert x-z\Vert.$$
On the other hand, using the inequality (3.9) of \cite{BBG},
$$\Vert x-P_B(x)\Vert=\Vert (I-P_B)(x-z)\Vert \leq k_m^c\Vert x-z\Vert \leq (1+\mathfrak{K}m)\Vert x-z\Vert.$$
Hence, $\cL\leq 1+\left(1+\frac{1}{t}\right)\mathfrak{K}m$.
Finally, the fact that the equality in \eqref{ineq_th1} can be attained is witnessed by Examples \ref{summing} and \ref{diff} below.

\subsection{Proof of Theorem \ref{main1}}

The scheme of the proof follows the lines in \cite[Theorem 3.2]{DKK} and \cite[Theorem 4.1]{DKO}, with some additional simplifications introduced in \cite{BBG}.

Given $x\in\mathbb X$ and $\cht_m\in \cGtm$, we denote by $A=\supp\cht_mx\in G(x,m,t)$. Pick any $z=\sum_{n\in B}b_n\be_n$ such that $\vert B\vert = m$. By definition of the Chebyshev operators,
\Be\|x-\ch^t_mx\|\leq \|x-p\|,\quad \mbox{for any $p=\sum_{n\in A}a_n\be_n$.}\label{chtp}\Ee
We make the selection of $p$ suggested in \cite{DKK}. Namely, if
$\al=\max_{n\notin A}|\be^*_n(x)|$, we let
\[
p=P_A(x)- P_A\big(T_\al(x-z)\big).
\]
It is easily verified that \Bea
x-p & = & (I-P_A)\big(x-T_\al(x-z)\big)+T_\al(x-z) \nonumber\\ & = & P_{B\setminus A}\big(x-T_\al(x-z)\big)+T_\al(x-z).
\label{p12}\Eea
Since $\La_\al(x-z)=\{n\mid |\be^*_n(x-z)|>\al\}\subset A\cup B$, then Lemma \ref{trun1} gives
\Be
\big\|T_\al(x-z)\big\|\leq g^c_{2m}\|x-z\|.
\label{Taz}\Ee
Next we treat the first term in \eqref{p12}.
Observe that $\max_{n\in B\setminus A}\vert \ben(x-T_\alpha(x-z))\vert\leq 2\alpha$, so Lemma \ref{conv} gives
\Bea
\big\|P_{B\setminus A}\big(x-T_\al(x-z)\big)\big\| & \leq &
2\al\,\sup_{|\e|=1}\big\|\bone_{\e(B\setminus A)}\big\|\nonumber\\
& \leq &
\frac2t\,\min_{n\in A\setminus B}|\be^*_n(x-z)|\,\sup_{|\e|=1}\big\|\bone_{\e(B\setminus A)}\big\|=(*).\label{PBA}
\Eea
At this point we have two possible approaches. Let $\eta_n=\sgn [e^*_n(x-z)]$. In the first approach we pick a greedy set
$\Ga\in G(x-z,|A\setminus B|,1)$, and control  \eqref{PBA} by
\Be
(*)
 \,\leq \,\frac2t\,\min_{n\in \Ga}|\be^*_n(x-z)|\,\,\tmu_m\,\big\|\bone_{\eta\Ga}\big\|\, \leq \, \frac2t\,\tmu_m\,\tg_{m}\|x-z\|,\label{PBA1}\Ee
 using Lemma \ref{propc} in the last step.
In the second approach, we argue as follows
\Be
(*)
 \,\leq \,\frac2t\,\min_{n\in A\setminus B}|\be^*_n(x-z)|\,\,\tmu^d_m\,\big\|\bone_{\eta(A\setminus B)}\big\|\, \leq \, \frac2t\,\ga_{2m}\,\tg_{2m}\,\tmu_m^d\,\|x-z\|,\label{PBA2}\Ee
using in the last step Lemma \ref{propc1}
and the fact that, if $\dt=\min_{A\setminus B}|\be^*_n(x-z)|$, then the set $(A\setminus B)\cup\{n\mid|\be^*_n(x-z)|>\dt\}\subset A\cup B$ and hence has cardinality $\leq 2m$.

\
We can now combine the estimates displayed in \eqref{chtp}-\eqref{PBA2} and obtain
\[
\|x-\ch^t_mx\|\leq \,\big[g^c_{2m}+\frac2t\,\min\big\{\,\tg_m\tmu_m\,,\;\ga_{2m}\tg_{2m}\tmu^d_m\,\big\}\big]\,\|x-z\|,
\]
which after taking the infimum over all $z$ establishes Theorem \ref{main1}.
\ProofEnd

\begin{Remark}
{\rm
In \cite[Theorem 3.5]{SY} a stronger inequality is stated (for $t=1$), namely \Be
\cLL\leq g^c_{2m}+2\tg_m\tmu^d_m.\label{Ye}\Ee
 The proof, however, seems to contain a gap, and a missing factor $k_m^c$ should also appear in the last summand.
 Nevertheless, it is still fair to ask whether the inequality \eqref{Ye} asserted in \cite{SY} may be true  with a different proof.}

\end{Remark}

\begin{Remark}
{\rm Using Remark \ref{propR1} in place of Lemma \ref{propc1} in  \eqref{PBA2}  above leads to an alternative and slightly simpler estimate
\Be
\cL \;\leq \;g_{2m}^c\,+\,\frac2t\,k_m\tmu_m^d\,.\label{cLk}\Ee
However, this would not be as efficient as \eqref{Lmain} when $\cB$ is quasi-greedy and conditional.}
\end{Remark}


\begin{Remark}
{\rm When $\mathcal B$ is quasi-greedy with constant $\bq=\sup_m g_m<\infty$, then Theorem \ref{main1} implies the following
\[
\cL \leq \bq+ 4t^{-1}\, \bq^2 \,\tilde{\mu}^d_m.\]
This is a slight improvement with respect to \cite[Theorem 4.1]{DKO}.
}\end{Remark}

\subsection{Proof of Theorem \ref{th3}}

Recall that $S_N=\sum_{n=1}^N \be^*_n(\cdot)\be_n$ and
\[
F_N(x)=\frac1N\sum_{n=1}^NS_n(x)=\sum_{n=1}^N\big(1-\frac{n-1}N\big)\be^*_n(x)\be_n.
\]
For $M>N$ we define the operators (of de la Vall\'ee-Poussin type) \Bea
V_{N,M}(x) & = & \frac M{M-N}\,F_{M}(x)-\frac N{M-N}F_{N}(x)\nonumber\\
& = &
\sum_{n=1}^N\be^*_n(x)\be_n\,+\,\sum_{n=N+1}^M\big(1-\frac{n-N-1}{M-N}\big)\,\be^*_n(x)\be_n.\label{VNM}\Eea
In particular, observe that, for $\beta$ as in \eqref{beta} we have
\Be
\max\big\{\|V_{N,M}\|, \|I-V_{N,M}\|\big\}\,\leq\, \frac{M+N}{M-N}\,\beta.
\label{VNMb}\Ee
We next prove that, if $c\geq2$, then for all $A,B\subset\SN$ such that $B>cA$ with $|A|=|B|\leq m$ it holds
\Be
\cL\geq \frac1{t\beta}\,\frac{c-1}{c+1}\,\frac{\|\bone_{\e A}\|}{\|\bone_{\eta B}\|},\quad\forall\;|\e|=|\eta|=1.
\label{beta1}\Ee
Pick any set $C>B$ such that $|B\cup C|=m$, and let
\[
x=\bone_{\e A}+t\bone_{\eta B}+t\bone_C.
\]
Then $B\cup C\in G(x,m,t)$, and hence there is a Chebyshev $t$-greedy operator so that
\[
x-\ch^t_m(x)=\bone_{\e A}+\sum_{n\in B\cup C} a_n\be_n,
\]
for some scalars $a_n\in\SK$. Clearly,
\[
\|x-\ch^t_m(x)\|\leq \cL\sigma_m(x)\,\leq\,\cL\,\|t\bone_{\eta B}\|,
\]
using $z=\bone_{\e A}+t\bone_C$ an $m$-term approximant.
On the other hand, let $N=\max A$. Since $\min B\cup C> cN$, then
\eqref{VNM} yields
\[
V_{N,cN}(x-\ch^t_mx)=\bone_{\e A}.
\]
Therefore, \eqref{VNMb} implies that
\[
\|x-\ch^t_m(x)\|\geq \frac{\|V_{N,cN}(x-\ch^t_mx)\|}{\|V_{N,cN}\|}\geq
\frac{c-1}{(c+1)\beta}\,\|\bone_{\e A}\|.
\]
We have therefore proved \eqref{beta1}.

We next show that when $|A|=|B|\leq m$ satisfy $A>cB$ then
\Be
\cL\geq \frac1{t\beta^2}\,\frac{c-1}{c+1}\,\frac{\|\bone_{\e A}\|}{\|\bone_{\eta B}\|},\quad\forall\;|\e|=|\eta|=1.
\label{beta2}\Ee
This together with \eqref{beta1} is enough to establish Theorem \ref{th3}. We shall actually show a slightly
stronger result:

\begin{lemma}
\label{lem_x}
Let $|A|=|B|\leq m$ and let $y\in\SX$ be such that $|y|_\infty:=\sup_n|\be^*_n(y)|\leq1$ and $A>c(B\cupdot \supp y)$. Then
\Be
\cL\geq \frac1{t\beta^2}\,\frac{c-1}{c+1}\,\frac{\|\bone_{\e A}\|}{\|\bone_{\eta B}+y\|},\quad\forall\;|\e|=|\eta|=1.
\label{beta3}\Ee
\end{lemma}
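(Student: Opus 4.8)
The plan is to mimic the proof of \eqref{beta1}, but now with the set $A$ sitting \emph{above} (in the index ordering) the set $B$ together with the support of the perturbation $y$, which forces us to use de la Vall\'ee-Poussin operators in both directions and hence to pay an extra factor of $\beta$. First I would choose a tail set $C>A$ with $|A\cupdot C|=m$ and define the test vector
\[
x=\bone_{\eta B}+y+t\bone_{\e A}+t\bone_C.
\]
Since $|\be^*_n(\bone_{\eta B}+y)|\le 1$ on $\supp(\bone_{\eta B}+y)$ while the coefficients on $A\cupdot C$ have modulus $t$, one checks that $A\cupdot C\in G(x,m,t)$, so there is a Chebyshev $t$-greedy operator $\ch^t_m$ with $\supp\ch^t_m(x)=A\cupdot C$, giving
\[
x-\ch^t_m(x)=\bone_{\eta B}+y+\sum_{n\in A\cupdot C}a_n\be_n
\]
for some scalars $a_n$. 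Using $z=t\bone_{\e A}+t\bone_C$ as an admissible $m$-term approximant, $\sigma_m(x)\le \|\bone_{\eta B}+y\|$, whence $\|x-\ch^t_m(x)\|\le \cL\,\|\bone_{\eta B}+y\|$.

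The next step is to extract $\|\bone_{\e A}\|$ from $x-\ch^t_m(x)$ by two applications of the operators $V_{N,M}$. Let $N_0=\max(B\cupdot\supp y)$ and $N_1=\max A$; by hypothesis $\min A> cN_0$ and $\min C> N_1$. Applying $I-V_{N_0,cN_0}$ to $x-\ch^t_m(x)$ annihilates the low-frequency block $\bone_{\eta B}+y$ (which is supported in $\{1,\dots,N_0\}$) and leaves a vector supported in $\{n> N_0\}$ that agrees with $t\bone_{\e A}+\sum_{n\in A\cupdot C}a_n\be_n$ on the relevant range; then applying $V_{N_1,cN_1}$ to the result kills the $C$-block (supported above $N_1$) and produces exactly $(I-V_{N_0,cN_0})$ applied to $t\bone_{\e A}+\sum_{n\in A}a_n\be_n$, which on $A$ is just the identity, so in total
\[
V_{N_1,cN_1}\big((I-V_{N_0,cN_0})(x-\ch^t_m(x))\big)=t\,\bone_{\e A}+\sum_{n\in A}a_n\be_n-V_{N_0,cN_0}\big(\cdots\big),
\]
and after a short bookkeeping this composition equals $\bone_{\e A}$ plus a term fully inside $\span\{\be_n:n>N_0\}$ that is killed by the outer operator — the cleanest way is to first note $(I-V_{N_0,cN_0})(x-\ch^t_mx)$ has the explicit form $t\bone_{\e A}+t\bone_C+\sum_{n\in A\cupdot C}a_n\be_n + (\text{stuff on }\{N_0<n\le cN_0\})$ and then observe $V_{N_1,cN_1}$ of \emph{that} isolates the $A$-part. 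I would carry this out carefully so that the final identity reads
\[
V_{N_1,cN_1}\circ(I-V_{N_0,cN_0})\,(x-\ch^t_m(x))=\bone_{\e A}.
\]

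Finally, combining the operator norm bound \eqref{VNMb} for each factor,
\[
\|\bone_{\e A}\|\le \|V_{N_1,cN_1}\|\,\|I-V_{N_0,cN_0}\|\,\|x-\ch^t_m(x)\|\le\Big(\tfrac{c+1}{c-1}\beta\Big)^2\,\cL\,\|\bone_{\eta B}+y\|,
\]
which after rearranging gives \eqref{beta3} up to replacing $t$: note the test vector carried a factor $t$ on the $\e A$-block, so one must be slightly careful — in fact one should instead take $x=\tfrac1t(\bone_{\eta B}+y)+\bone_{\e A}+\bone_C$ (rescaling by $1/t$ so that $A\cupdot C$ remains a $t$-greedy set since $\min_{A\cupdot C}|\be^*_n(x)|=1\ge t\cdot\tfrac1t=t\max_{\text{rest}}$), which then yields the factor $\tfrac1t$ in \eqref{beta3} exactly as claimed. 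The main obstacle, and the step I would write out most carefully, is the double application of the de la Vall\'ee-Poussin operators: one must verify that the intermediate vector $(I-V_{N_0,cN_0})(x-\ch^t_m(x))$ has no component on $B\cupdot\supp y$ \emph{and} that its components on the buffer zone $\{N_0<n\le cN_0\}$ (which are nonzero only if $A$ or $C$ meet that zone — here $\min A>cN_0$ rules this out) do not interfere, so that the outer operator $V_{N_1,cN_1}$ recovers precisely $\bone_{\e A}$; the separation hypotheses $A>c(B\cupdot\supp y)$ and $C>A$ are exactly what makes the two buffer zones disjoint from the sets they must not touch. Specializing to $y=0$ recovers \eqref{beta2}, and together with \eqref{beta1} this proves Theorem \ref{th3}.
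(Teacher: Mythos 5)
There is a fatal role reversal in your construction. You put $A$ \emph{inside} the weak greedy set: with your test vector $x=\bone_{\eta B}+y+t\bone_{\e A}+t\bone_C$ the operator has $\supp\ch^t_m(x)=A\cupdot C$, and the Chebyshev step then minimizes $\|x-\sum_{n\in A\cup C}c_n\be_n\|$ over \emph{all} coefficients $c_n$. Consequently the residual on $A$ is $\sum_{n\in A}(t\e_n-c_n)\be_n$ with $c_n$ chosen by the minimization, not anything you control; in fact the minimizer can take $c_n=t\e_n$ on $A$ and $c_n=t$ on $C$, so $\|x-\ch^t_m(x)\|\leq\|\bone_{\eta B}+y\|$ and no lower bound of order $\|\bone_{\e A}\|$ is possible. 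Your key identity $V_{N_1,cN_1}\circ(I-V_{N_0,cN_0})(x-\ch^t_m(x))=\bone_{\e A}$ is therefore false: what the de la Vall\'ee-Poussin composition isolates is the uncontrolled block $\sum_{n\in A}(t\e_n-c_n)\be_n$. The paper's proof does the opposite: it takes $x=\bone_{\e A}+ty+t\bone_{\eta B}+t\bone_C$ with $C$ far above $A$ and $|B\cup C|=m$, so that $B\cup C\in G(x,m,t)$ is the greedy set and $A$ lies \emph{outside} $\supp\ch^t_m(x)$; then the residual is $\bone_{\e A}+ty+\sum_{n\in B\cup C}a_n\be_n$, the $A$-block survives untouched, $(I-V_{N,cN})\circ V_{L,\la L}$ extracts exactly $\bone_{\e A}$, and the approximant $\bone_{\e A}+t\bone_C$ gives $\sigma_m(x)\le t\,\|\bone_{\eta B}+y\|$, which is where both the factor $t$ and the denominator $\|\bone_{\eta B}+y\|$ come from.

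Two further points would still need repair even if the extraction were legitimate. First, you only impose $C>A$, but $V_{N_1,cN_1}$ annihilates coefficients with indices $>cN_1$ only, so you would need $C>cA$ at least; the paper instead separates $C$ by an independent parameter $\la$ (i.e.\ $C>\la A$) and lets $\la\to\infty$, which is what yields the single factor $\frac{c-1}{c+1}$ rather than the weaker $\bigl(\frac{c-1}{c+1}\bigr)^2$ your two $c$-dependent operators would produce. Second, your $t$-bookkeeping (the final rescaling $x=\frac1t(\bone_{\eta B}+y)+\bone_{\e A}+\bone_C$) does not cure the main defect, since $A$ is still inside the greedy set and the Chebyshev minimization still erases it.
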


Observe that the case $y=0$ in \eqref{beta3} yields \eqref{beta2}. We now show \eqref{beta3}.
Pick a large integer $\la>1$ and a set $C>\la A$ such that $|B\cup C|=m$. Let
\[
x=\bone_{\e A}+ty+t\bone_{\eta B}+t\bone_C.
\]
As before, $B\cup C\in G(x,m,t)$, and hence for some Chebyshev $t$-greedy operator we have
\[
x-\ch^t_m(x)=\bone_{\e A}+ty+\sum_{n\in B\cup C} a_n\be_n,
\]
for suitable scalars $a_n\in\SK$. Choosing $\bone_{\e A}+t\bone_C$ as $m$-term approximant of $x$ we see that
\[
\|x-\ch^t_m(x)\|\leq \cL\sigma_m(x)\,\leq\,\cL\,t\,\|\bone_{\eta B}+y\|.
\]
On the other hand, calling $N=\max(B\cupdot\supp y)$ and $L=\max A$ we have
\[
(I-V_{N,cN})\circ V_{L,\la L}\big(x-\ch^t_mx\big)=\bone_{\e A}
\]
Thus,
\[
\|x-\ch^t_m(x)\|\geq \frac{\|\bone_{\e A}\|}{\|I-V_{N,cN}\|\|V_{L,\la L}\|}\geq
\frac{c-1}{(c+1)\beta}\,\frac{\la-1}{(\la+1)\beta}\,\|\bone_{\e A}\|.
\]
Therefore we obtain
\[
\cL\geq \frac1{t\beta^2}\,\frac{c-1}{c+1}\,\frac{\la-1}{\la+1}\,\frac{\|\bone_{\e A}\|}{\|\bone_{\eta B}+y\|}
\]
which letting $\la\to\infty$ yields \eqref{beta3}. This completes the proof of Lemma \ref{lem_x}, and hence of Theorem \ref{th3}.

\begin{Remark}
{\rm
When $\cB$ is a Schauder basis, a similar proof gives the following lower bound, which is also obtained in \cite[Theorem 2.2]{SY}
\[
\cL\geq\,\frac1{(K_b+1)t}\,\sup\Big\{\frac{\|\bone_{\e A}\|}{\|\bone_{\eta B}\|}\mid |A|=|B|= m, \;\mbox{$A>B$ or $B>A$ },\;|\e|=|\eta|=1\Big\}.
\]
The statement for Ces\`aro bases, however, will be needed for the applications in \S\ref{ex_trig}.
}\end{Remark}

\subsection{Lower bounds for general M-bases}
Observe that
\[
\vartheta_{m,c}=\sup_{|A|\leq m}\vartheta_c(A),\quad\mbox{where}\quad
\vartheta_c(A)=\sup_{{B\mid |B|=|A|}\atop{{B>cA}\atop{\e,\eta\in\Upsilon}}}
\max\Big\{\frac{\|\bone_{\e A}\|}{\|\bone_{\eta B}\|},\frac{\|\bone_{\eta B}\|}{\|\bone_{\e A}\|}\Big\}.
\]
We consider a new parameter\Be
\vartheta_m=\sup_{|A|\leq m}\;\inf_{c\geq1}\;\vartheta_c(A).
\label{theta2}\Ee
We remark that, unlike $\vartheta_{m,c}$, the  parameter $\vartheta_m$ depends on $\{\be_n\}_{n=1}^\infty$ but not on the reorderings of the system.
We shall give a lower bound for $\cL$ in terms of $\vartheta_m$ in a less restrictive situation than the Ces\`aro basis assumption on $\{\be_n\}_{n=1}^\infty$. 

\

Given $\rho\geq1$, we say that $\{\be_n\}_{n=1}^\infty$ is $\rho$-admissible if the following holds: \emph{for each finite set $A\subset\SN$, there exists $n_0=n_0(A)$ such that, for all sets $B$ with $\min B\geq n_0$ and $|B|\leq|A|$,}
\Be
\big\|\sum_{n\in A}\al_n\be_n\big\|\leq \rho\,\big\|\sum_{n\in A\cup B}\al_n\be_n\big\|,\quad\forall\;\al_n\in\SK.
\label{A1}
\Ee
Observe that \eqref{A1} implies that
\Be
\big\|\sum_{n\in B}\al_n\be_n\big\|\leq (\rho+1)\,\big\|\sum_{n\in A\cup B}\al_n\be_n\big\|,\quad\forall\;\al_n\in\SK.
\label{B1}
\Ee
This condition is clearly satisfied by all Schauder and Ces\`aro bases (with $\rho=K_b$ or $\rho>\beta$), but we shall see below that it also holds in more general situations.

\begin{proposition}\label{Ptheta1}
Let $\{\be_n,\be^*_n\}_{n=1}^\infty$ be an M-basis such that  $\{\be_n\}_{n=1}^\infty$ is $\rho$-admissible. Then
\Be
\cL\geq \frac{\vartheta_m}{(\rho+1)t},\quad\forall\;m\in\SN,\quad t\in(0,1].
\label{thetam}\Ee
\end{proposition}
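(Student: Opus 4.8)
The plan is to mimic the argument used to establish Theorem \ref{th3}, in particular the proof of \eqref{beta1} (the case $B>cA$ for a Ces\`aro basis), but replacing the de la Vall\'ee--Poussin operators $V_{N,M}$ by the $\rho$-admissibility hypothesis \eqref{A1}--\eqref{B1}, which plays exactly the same role: it lets us ``read off'' the coefficients supported on a fixed finite set $A$ from a vector whose remaining support lies sufficiently far out. Fix a set $A$ with $|A|\leq m$, fix signs $\e,\eta\in\Upsilon$, and fix $c\geq 1$; by the very definition of $\vartheta_c(A)$ it suffices to bound the two ratios $\|\bone_{\e A}\|/\|\bone_{\eta B}\|$ and $\|\bone_{\eta B}\|/\|\bone_{\e A}\|$ for $B$ with $|B|=|A|$ and $B>cA$ (and, in the end, to let $c\to\infty$ or choose $c$ appropriately so that the separation required by $\rho$-admissibility is achieved). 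Concretely, given $A$ let $n_0=n_0(A)$ be as in \eqref{A1}; we only consider sets $B$ (and, further down, $C$) with $\min B\geq n_0$, which is compatible with the supremum defining $\vartheta_m$ since we take $\inf_{c\ge1}$ there and large $c$ forces $B$ far to the right of $A$ — more precisely one should first pass to the tail of $\cB$, but since $\vartheta_m$ is rearrangement-independent and only the relative position matters, this is harmless.

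For the first ratio, I would take $x=\bone_{\e A}+t\bone_{\eta B}+t\bone_C$ where $C>B$ is chosen with $|B\cup C|=m$ and $\min C$ large; then $B\cup C\in G(x,m,t)$, so some Chebyshev $t$-greedy operator gives $x-\ch^t_m(x)=\bone_{\e A}+\sum_{n\in B\cup C}a_n\be_n$. Using $z=\bone_{\e A}+t\bone_C$ as an $m$-term approximant, $\|x-\ch^t_m(x)\|\leq \cL\,\sigma_m(x)\leq \cL\,t\,\|\bone_{\eta B}\|$. On the other hand, since $A$ is disjoint from and to the left of the support $B\cup C$ of the tail, applying \eqref{A1} to the vector $x-\ch^t_m(x)$ (whose restriction to $A$ is $\bone_{\e A}$) gives $\|\bone_{\e A}\|\leq \rho\,\|x-\ch^t_m(x)\|$, provided $\min(B\cup C)\geq n_0(A)$, i.e.\ provided $c$ was taken large enough. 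Combining, $\|\bone_{\e A}\|\leq \rho\, t\,\cL\,\|\bone_{\eta B}\|$, i.e.\ $\|\bone_{\e A}\|/\|\bone_{\eta B}\|\leq \rho\, t\,\cL$. For the reverse ratio I would instead put the ``big'' block on $A$: take $x=t\bone_{\e A}+\bone_{\eta B}+\bone_C$ with now $C>\la A$ for large $\la$... — wait, here $B$ is to the \emph{right} of $A$, so the natural move is to swap roles, taking $A$ as the greedy part. Set $x=\bone_{\eta B}+t\bone_{\e A}+t\bone_{C'}$ with $C'>cA$, $|A\cup C'|=m$; then $A\cup C'\in G(x,m,t)$ and a Chebyshev operator yields $x-\ch^t_m(x)=\bone_{\eta B}+\sum_{n\in A\cup C'}a_n\be_n$, so $\|x-\ch^t_m(x)\|\leq \cL\,t\,\|\bone_{\e A}\|$; and since $B$ lies to the left of $A\cup C'$, \eqref{B1} (the companion of \eqref{A1}) gives $\|\bone_{\eta B}\|\leq(\rho+1)\|x-\ch^t_m(x)\|$, hence $\|\bone_{\eta B}\|/\|\bone_{\e A}\|\leq (\rho+1)\,t\,\cL$.

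Taking the maximum of the two ratios we get $\vartheta_c(A)\leq (\rho+1)\,t\,\cL$ for every sufficiently large $c$, hence $\inf_{c\geq1}\vartheta_c(A)\leq (\rho+1)\,t\,\cL$, and finally $\vartheta_m=\sup_{|A|\leq m}\inf_{c\geq1}\vartheta_c(A)\leq (\rho+1)\,t\,\cL$, which is \eqref{thetam}. The one delicate point — the ``main obstacle'' — is bookkeeping the separation: $\rho$-admissibility only gives the comparison \eqref{A1} once $\min B\geq n_0(A)$, an \emph{absolute} threshold depending on $A$, whereas $\vartheta_c(A)$ only controls the \emph{relative} separation $\min B>c\max A$. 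The clean way around this is to note that in the definition \eqref{theta2} of $\vartheta_m$ one is free to replace, for the purpose of the lower bound, $A$ by any set $A'$ with $\|\bone_{\e A'}\|=\|\bone_{\e A}\|$ and $\min A'$ arbitrarily large — but that is false in general. The correct fix, and the one I would write, is simpler: for a given finite $A$, choose $c$ so large that $cA:=\{n:n>c\max A\}\subset\{n\geq n_0(A)\}$ automatically (possible since $n_0(A)$ is a fixed integer), so that every admissible $B$ with $B>cA$ in the supremum defining $\vartheta_c(A)$ already satisfies $\min B\geq n_0(A)$; the tail $C$ (resp.\ $C'$) is likewise chosen beyond $n_0(A)$. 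Then the argument above goes through verbatim, and letting $c$ range over all such large values and taking $\inf_c$ gives the claim. Everything else is a routine repetition of the Theorem \ref{th3} computation with $V$-operators replaced by \eqref{A1}--\eqref{B1}.
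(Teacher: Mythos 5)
Your overall strategy (build test vectors whose $t$-greedy set avoids $A$, bound $\|x-\ch^t_m(x)\|$ by $\cL\,\sigma_m(x)$, and use admissibility to pull out the indicator sums) is the same as the paper's, but the admissibility step is not applied within its hypotheses, and this is a genuine gap rather than bookkeeping. In your first construction the vector $x-\ch^t_m(x)=\bone_{\e A}+\sum_{n\in B\cup C}a_n\be_n$ has its ``far'' support on $B\cup C$, a set of cardinality $m$, whereas \eqref{A1} with base set $A$ only covers far sets of cardinality at most $|A|$; so whenever $|A|<m$ the inequality $\|\bone_{\e A}\|\leq\rho\,\|x-\ch^t_m(x)\|$ is simply not granted by $\rho$-admissibility as defined. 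Your closing ``fix'' repairs only the absolute-versus-relative threshold issue for $n_0(A)$ and leaves this cardinality obstruction untouched. The paper's device is different: it pads $A$ by a set $C$ disjoint from $A$ with $|A\cup C|=m$, takes $n_0=n_0(A\cup C)$ (assumed larger than $\max (A\cup C)$), keeps the padding on the \emph{near} side inside the greedy set, and applies \eqref{A1}--\eqref{B1} with base set $A\cup C$ (inserting zero coefficients on $C$), so the far sets ($B$, resp. $B\cup D$ with $D\subset[n_0,n_0+m-1]$) always have cardinality $\leq m=|A\cup C|$.

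Your second construction has a further problem: since $B>cA$, the assertion that ``$B$ lies to the left of $A\cup C'$'' is false, and with $C'>cA$ unspecified relative to $B$ you cannot extract $\bone_{\eta B}$ by one application of \eqref{B1}: if $C'$ sits beyond $B$, the remaining support straddles $B$ on both sides (so neither \eqref{A1} nor \eqref{B1} applies directly, and a two-step removal runs into the same cardinality problem); if $C'$ sits between $A$ and $B$, the hypothesis needed is $\min B\geq n_0(A\cup C')$, not $\min B\geq n_0(A)$, so your choice of $c$ is again insufficient, and demanding a fixed $C'$ to the left of every admissible $B$ forces exactly the paper's bookkeeping: fix the padding $C$ first, set $n_0=n_0(A\cup C)$, restrict to $\min B\geq n_0+m$, and compare with $\vartheta_{n_0+m}(A)\geq\inf_{c\geq1}\vartheta_c(A)$, which is legitimate precisely because of the infimum in \eqref{theta2}. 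In short, the idea you are missing is to attach the admissibility threshold to the padded set $A\cup C$ of cardinality exactly $m$ and keep that padding inside the greedy set; with that change your argument becomes the paper's proof.
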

\begin{proof}
Fix $A\subset\SN$ such that $|A|\leq m$. Choose $C$ disjoint with $A$ such that $|A\cup C|=m$.
Let $n_0=n_0(A\cup C)$ as in the above definition, which we may assume larger than $\max A\cup C$. Pick any $B$ with $\min B\geq n_0$ and $|B|=|A|$, and any $\e,\eta\in\Upsilon$.
Let $x=t\bone_{\e A}+t \bone_C+\bone_{\eta B}$. Then $A\cup C\in G(x,m,t)$, and there is a Chebyshev $t$-greedy operator with $\cht_m(x)$ supported in $A\cup C$. Thus,
\[
\|x-\cht_m(x)\|\leq \cL\,\sigma_m(x)\leq \cL\,\|x-(\bone_{\eta B}+t\bone_C)\|=\cL\,t\,\|\bone_{\e A}\|.
\]
On the other hand, using the property in \eqref{B1} one obtains\[
\|x-\cht_m(x)\|\geq \frac{\|\bone_{\eta B}\|}{\rho+1}.
\]
Thus,
\[
\cL\,\geq\,\frac{1}{(\rho+1)t}\,\frac{\|\bone_{\eta B}\|}{\|\bone_{\e A}\|}.
\]
We now assume additionally that $\min B\geq  n_0+m$, and pick $D\subset[n_0,n_0+m-1]$ such that $|B|+|D|=m$.
Let $y=\bone_{\e A}+t\bone_{\eta B}+t\bone_D$. Then $B\cup D\in G(y,m,t)$ and a similar reasoning gives
\[
\frac{\|\bone_{\e A}\|}\rho\leq \|y-\cht_m(y)\|\leq \cL\,\sigma_m(y)\leq \,\cL\,
t\,\|\bone_{\eta B}\|.
\]
Thus,
\[
\cL\,\geq\,\frac{1}{(\rho+1)t}\,\max\Big\{\frac{\|\bone_{\eta B}\|}{\|\bone_{\e A}\|},\frac{\|\bone_{\e A}\|}{\|\bone_{\eta B}\|}\Big\},
\]
and taking the supremum over all $|B|=|A|$ with $B\geq (n_0+m)A$ and all $\e,\eta\in\Upsilon$, we see that
\[
\cL\,\geq\,\frac{\vartheta_{n_0+m}(A)}{(\rho+1)t}\,\geq \frac{\inf_{c\geq1} \vartheta_{c}(A)}{(\rho+1)t}.
\]
Finally, a supremum over all $|A|\leq m$ leads to \eqref{thetam}.
\end{proof}

\

We now give some general conditions in $\{\be_n,\be^*_n\}_{n=1}^\infty$ and $\SX$ under which $\rho$-admissibility holds. We recall a few standard definitions; see e.g. \cite{Hajek}. We use the notation $[\be_n]_{n\in A}={\overline{\span}}\{\be_n\}_{n\in A}$, for $A\subset\SN$.  A sequence $\{\be_n\}_{n=1}^\infty$ is \emph{weakly null} if\[
\lim_{n\to\infty}x^*(\be_n)=0,\quad\forall\;x^*\in\SX^*.
\]
Given a subset $Y\subset\SX^*$, we shall say that $\{\be_n\}_{n=1}^\infty$ is \emph{$Y$-null} if\[
\lim_{n\to\infty}y(\be_n)=0,\quad\forall\;y\in Y.
\]
Given $\kappa\in (0,1]$, we say that a set $Y\subset \SX^*$ is  $\kappa$-norming    whenever
$$ \sup_{x^* \in Y, \Vert x^* \Vert \leq 1} \vert x^*(x) \vert\,\geq \,\kappa\,\|x\|, \quad\forall\;x\in\SX  .$$

\begin{proposition}
\label{Ptheta2}
Let $\{\be_n,\be^*_n\}_{n=1}^\infty$ be a biorthogonal system in $\SX\times\SX^*$. Suppose that the sequence $\{\tbe_n:=\|\be^*_n\|\,\be_n\}_{n=1}^\infty\subset\SX$ is $Y$-null, for some subset $Y \subset \SX^*$ which is $\kappa$-norming. 
Then $\{\be_n\}_{n=1}^\infty$ is $\rho$-admissible for every $\rho>1/\kappa$. 
\end{proposition}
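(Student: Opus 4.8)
The plan is to use the $\kappa$-norming property of $Y$ to detect the norm of any vector supported on $A$ by a single functional in the unit ball of $Y$, and then to use the $Y$-nullity of $\{\tbe_n\}$ to make the contribution of the far-away block $B$ negligible. So I would fix $\rho>1/\kappa$, choose an auxiliary constant $\kappa_0$ with $1/\rho<\kappa_0<\kappa$, and produce, for each finite $A\subset\SN$, an integer $n_0(A)$ for which \eqref{A1} holds with this $\rho$.

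Given a finite $A$ with $k:=|A|\geq 1$ (the case $A=\emptyset$ being trivial), the first step is to reduce to finitely many norming functionals. The space $F_A:=\span\{\be_n:n\in A\}$ is $k$-dimensional (the $\be_n$, $n\in A$, are linearly independent by biorthogonality), so its unit sphere $S_A$ is compact. For each $u\in S_A$ the $\kappa$-norming hypothesis yields $y_u\in Y$ with $\|y_u\|\leq 1$ and $|y_u(u)|>\kappa_0$, and the set $V_u:=\{w\in S_A:|y_u(w)|>\kappa_0\}$ is relatively open in $S_A$ and contains $u$. Compactness gives a finite subcover $S_A=V_{u_1}\cup\cdots\cup V_{u_N}$; write $y_j:=y_{u_j}$. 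Since each $y_j\in Y$ and $\{\tbe_n\}$ is $Y$-null, $|y_j(\tbe_n)|\to 0$ as $n\to\infty$, and since there are finitely many $j$ I can choose $n_0=n_0(A)>\max A$ with $|y_j(\tbe_n)|\leq(\rho\kappa_0-1)/k$ for all $n\geq n_0$ and all $j$; note $\rho\kappa_0-1>0$ because $\kappa_0>1/\rho$.

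With $n_0$ fixed, take any $B$ with $\min B\geq n_0$ and $|B|\leq k$ (so $A\cap B=\emptyset$), fix scalars $(\al_n)_{n\in A\cup B}$, and set $u=\sum_{n\in A}\al_n\be_n$, $v=\sum_{n\in B}\al_n\be_n$, $w=u+v$; the goal is $\|u\|\leq\rho\|w\|$. Assuming $u\neq 0$, pick $j$ with $u/\|u\|\in V_{u_j}$, so $|y_j(u)|>\kappa_0\|u\|$. For $n\in B$, biorthogonality gives $\al_n=\be^*_n(w)$, hence $|\al_n|\leq\|\be^*_n\|\,\|w\|$; combined with $|y_j(\be_n)|=|y_j(\tbe_n)|/\|\be^*_n\|$ this yields $|y_j(v)|\leq\sum_{n\in B}|y_j(\tbe_n)|\,\|w\|\leq(\rho\kappa_0-1)\|w\|$ since $|B|\leq k$. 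Then $\|w\|\geq|y_j(w)|\geq|y_j(u)|-|y_j(v)|>\kappa_0\|u\|-(\rho\kappa_0-1)\|w\|$, which rearranges to $\rho\|w\|>\|u\|$, i.e. \eqref{A1}. As the argument was run with $\rho$ an arbitrary fixed value $>1/\kappa$, this proves $\rho$-admissibility for every such $\rho$.

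The only real obstacle I anticipate is the uniformity of $n_0$: the norming functional depends on the particular vector $u$, so a naive argument would make $n_0$ depend on the coefficients $(\al_n)_{n\in A}$. This is exactly what compactness of $S_A$ resolves, by replacing the continuum of choices $\{y_u\}_{u\in S_A}$ with a finite family $y_1,\dots,y_N$ to which the $Y$-nullity applies simultaneously. Everything else is elementary, using only biorthogonality, the identity $\tbe_n=\|\be^*_n\|\be_n$, and $\|y_j\|\leq 1$.
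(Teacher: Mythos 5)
Your proof is correct, and its overall skeleton matches the paper's: use compactness of the unit sphere of the finite-dimensional span $[\be_n]_{n\in A}$ to replace the $\kappa$-norming set $Y$ by a finite family of functionals that norm this span uniformly (you do it via an open cover with margin $\kappa_0<\kappa$, the paper via an $\e\kappa/2$-net; these are interchangeable), then invoke $Y$-nullity of $\{\tbe_n\}$ for that finite family to fix $n_0(A)$. Where you genuinely diverge is in how the final inequality is closed. The paper estimates the tail vector $f=\sum_{n\in B}\al_n\be_n$ by $\max_{x^*\in S}|x^*(f)|\leq \dt\kappa\|f\|$, i.e. in terms of $\|f\|$ itself, and must then run a two-case argument ($\|f\|\geq(1+\ga)\|e\|$ versus $\|f\|<(1+\ga)\|e\|$) with a balancing parameter $\ga$, arriving at the intermediate ``almost direct sum'' inequality \eqref{eq:direct_sum} before tuning $\e=\dt$ to beat $1/\rho$. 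You instead observe that for $n\in B$ the coefficient is recovered from the full vector, $\al_n=\be^*_n(u+v)$, so $|\al_n|\leq\|\be^*_n\|\,\|u+v\|$, and hence $|y_j(v)|\leq(\rho\kappa_0-1)\|u+v\|$ directly in terms of the norm of the whole sum; a single triangle-inequality step then yields $\|u\|\leq\rho\|u+v\|$ with no case analysis and no auxiliary $\ga$. Your route is shorter and delivers the target constant $\rho$ in one stroke; the paper's route, at the cost of the extra balancing, isolates the quantitative statement \eqref{eq:direct_sum} about the near-direct-sum decomposition $E\oplus G$, which is of some independent interest. Both arguments use only biorthogonality, $\|y_j\|\leq1$, and the identity $\tbe_n=\|\be^*_n\|\be_n$, so the proposal is a valid (and slightly cleaner) proof of the proposition.
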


\begin{proof}
Consider a finite set $A\subset\SN$ with say $\vert A \vert = m$ and denote $$E :=[\be_n]_{n\in A}.$$ Given $\varepsilon>0$, one can find a finite set $S \subset Y \cap \{x^*\in\SX^*\mid\|x^*\|=1\}$  so that
	\begin{equation}\label{e1} \max_{x^* \in S} \vert x^*(e) \vert \geq (1-\varepsilon) \kappa  \Vert e \Vert,\quad\forall\;e\in E . \end{equation}
	Indeed, it suffices to verify the above inequality for $e$ of norm $1$. Pick an $\varepsilon \kappa/2$-net $(z_k)_{k=1}^N$ in the unit sphere of $E$. For any $k$ find a norm one $z_k^* \in Y$ so that $|z_k^*(z_k)| > (1-\varepsilon/2) \kappa$. We claim that $S = \{z_k^* : 1 \leq k \leq N\}$ has the desired properties. To see this, pick a norm one $e \in E$, and find $k$ with $\|e - z_k\| \leq \varepsilon \kappa/2$. Then
	$$
	\max_{x^* \in S} \vert x^*(e) \vert \geq |z_k^*(e)| \geq |z_k^*(z_k)| - \|e-z_k\| \geq (1-\varepsilon/2) \kappa - \varepsilon \kappa/2 = (1-\varepsilon) \kappa .
	$$
Next, since the sequence $\{\|\be^*_n\|\,\be_n\}$ is $Y$-null, for each $\delta>0$  we can find an integer $n_0> \max A$ so that
	$$
	\max_{x^* \in S} \vert x^*(\be_n) \vert\,\|\be^*_n\|\, \leq \frac{\delta\kappa}{m},\quad\forall\;  n\ge n_0 .
	$$
 Pick any $B$ of cardinality $m$ with $\min B\geq n_0$, and let $$G := [\be_n]_{n\in B}.$$ For $f = \sum_{n \in B} \be^*_n(f) \be_n \in G$, we have 
\begin{equation}\label{e2} \max_{x^* \in S} \vert x^*(f) \vert 
\leq \max_{x^*\in S}\,\sum_{n\in B}|x^*(\be_n)|\,\|\be^*_n\|\,\|f\|\,\leq\,
\delta\kappa  \Vert f \Vert.\end{equation} We claim that
	\begin{equation}
	\Vert e+f \Vert \geq \frac{(1-\e-\dt)\kappa}{1+\dt\kappa}\,\|e\|,\, \, \,
	{\textrm{  for any  }} \, e \in E, \, f \in G .
	\label{eq:direct_sum}
	\end{equation}
To show this, we fix $\ga>0$ (to be chosen later), and assume first that $\|f\|\geq(1+\ga)\|e\|$.
Then,
\[
\|e+f\|\geq\|f\|-\|e\|\geq \ga\|e\|.
\]
Next assume that $\|f\|<(1+\ga)\|e\|$, then using (\ref{e1}) and (\ref{e2}) we obtain that
	$$
	\Vert e+f \Vert \geq \max_{x^* \in S} \vert x^* (e+f) \vert \geq (1-\varepsilon)\kappa \Vert e \Vert - \delta\kappa \Vert f \Vert > (1-\varepsilon- \delta(1+\gamma))\kappa \Vert e \Vert .
	$$
We now choose $\ga$ so that $\ga=(1-\varepsilon- \delta(1+\gamma))\kappa$, that is,
\[
\ga=\frac{(1-\varepsilon- \delta)\kappa}{1+\dt\kappa},
\]
which shows the claim in \eqref{eq:direct_sum}.
Now, given $\rho>1/\kappa$, we may pick $\dt=\e$ sufficiently small so that the above number $\ga>1/\rho$.
Then, \eqref{eq:direct_sum} becomes
\[
\Vert e+f \Vert \geq \frac1\rho\,\|e\|,\, \, \,
	{\textrm{  for any  }} \, e \in [e_n]_{n\in A}, \, f \in [e_n]_{n\in B},\quad 
\]
for all $B$ with $\min B\geq n_0$ and $|B|=|A|=m$. Thus, $\{\be_n\}_{n=1}^\infty$ is $\rho$-admissible.
\end{proof}

\

We mention a few cases where the hypotheses in the above proposition can be applied:

\bline (1) When the sequence $\{\tbe_n\}_{n=1}^\infty$ is weakly null, since $Y = \SX^*$ is always $1$-norming.

\sline (2)  When $\sup_{n\geq1}\|\be_n\|\,\|\be^*_n\|<\infty$ and $Y=[\be^*_n]_{n\in\SN}$ is $\kappa$-norming, since the first condition implies that $\{\tbe_n\}_{n=1}^\infty$ is  $Y$-null. In particular, when $\{\be_n\}_{n=1}^\infty$ is a Schauder basis in $\SX$, in which case the above conditions hold with $\kappa=1/K_b$; see \cite[Theorems I.3.1 and I.12.2]{SingerI}.

\sline (3) In every separable Banach space $\SX$, if one picks $\{\be_n,\be^*_n\}_{n=1}^\infty$ to be an $M$-basis with the properties in (2) and $\kappa=1$; see e.g. \cite[Theorem III.8.5]{SingerII} for the existence of such bases.

\sline (4) Let $\SX=C(K)$ where $K$ is a compact Hausdorff set and let $\mu$ be a Radon probability measure in $K$ with $\supp\mu=K$. Then, the natural embedding of $C(K)$ into $L_\infty(\mu)$ is isometric, and therefore  $Y=L_1(\mu)$ is $1$-norming in $\SX$. Let $\{\be_n\}_{n=1}^\infty$ be a complete system in $\SX$ which is orthonormal with respect to $\mu$ and uniformly bounded, that is, $\int_K \be_n {\overline{\be_m}} \, d\mu = \dt_{n,m}$ and $\sup_n\|\be_n\|_\infty<\infty$. Then the sequence $\{\be_n\}_{n=1}^\infty$ is  $L_1(\mu)$-null in $\SX$. Indeed, this follows from case (2), and the fact that $C(K)$ is dense in $L_1(\mu)$.

Examples of such systems in $C(K)$ include the trigonometric system in $C[0,1]$  (in the real or complex case), as well as certain polygonal versions of the Walsh system \cite{Cie68, ropela79,weisz01}, or any reorderings of them (which may cease to be Ces\`aro bases).

\sline (5) As a dual of the previous, if $\SX=L^1(\mu)$ then every system $\{\be_n\}_{n=1}^\infty$ as in (4) is weakly null, and hence case (1) applies.

\sline (6) Recall the definition of the \emph{right fundamental function}:
$ {\mathbf{\varphi}}_r(m) = \sup \lbrace \Vert \bone_A \Vert : \vert A \vert \leq m \rbrace$.
If $\{\be_n\}_{n=1}^\infty$ is such that ${\mathbf{\varphi}}_r(m) = {\mathbf{o}}(m)$, then this system is weakly null. Indeed, first note that also $ {\mathbf{\tphi}}_r(m) = \sup \lbrace \Vert \bone_{\eta A} \Vert : \vert A \vert \leq m,\;|\eta|=1 \rbrace={\mathbf{o}}(m)$.  Assume that  the system is not weakly null. Then there exist a norm one $x^* \in \SX^*$ and $\varepsilon_0 > 0$ so that the set $A=\{ n \in \mathbb N : \vert x^*(\be_n) \vert \geq \varepsilon_0 \} $ is infinite. Pick any $F \subset A$ with $|F| = m$ and let $\eta_n=\sig[ x^*(\be_n)]$; then 
$$
{\mathbf{\tphi}}_r(m) \geq \Vert \bone_{{\overline{\eta}}F} \Vert \geq \vert x^*(  \sum_{n \in F} {\overline{\eta_n}}\be_n ) \vert =\sum_{n\in F}|x^*(\be_n) | \geq m\varepsilon_0 ,
$$
contradicting our assumption.

\

Finally, as a consequence of Propositions \ref{Ptheta1} and \ref{Ptheta2} one obtains
\begin{theorem}\label{Thm_thetam}
Let $\{\be_n,\be^*_n\}_{n=1}^\infty$ be a seminormalized M-basis such that the sequence $\{\be_n\}_{n=1}^\infty$ is $Y$-null for some subset $Y \subset \SX^*$ which is $\kappa$-norming.
Then, if  $\vartheta_m$ is as in \eqref{theta2}, we have
\Be
\cL\geq \frac{\kappa\,\vartheta_m}{(\kappa+1)t},\quad\forall\;m\in\SN,\quad t\in(0,1].
\label{chtheta}\Ee
\end{theorem}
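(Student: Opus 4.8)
The plan is to combine the two propositions immediately. Proposition~\ref{Ptheta2} takes as input a biorthogonal system $\{\be_n,\be^*_n\}$ such that $\{\tbe_n=\|\be^*_n\|\be_n\}$ is $Y$-null with $Y$ being $\kappa$-norming, and concludes that $\{\be_n\}$ is $\rho$-admissible for every $\rho>1/\kappa$. Proposition~\ref{Ptheta1} then says that $\rho$-admissibility of $\{\be_n\}$ yields $\cL\geq \vartheta_m/((\rho+1)t)$. So the first step is just to check that the hypothesis of Theorem~\ref{Thm_thetam} — that $\{\be_n\}$ itself (not $\{\tbe_n\}$) is $Y$-null — is equivalent to, or at least implies, the hypothesis of Proposition~\ref{Ptheta2}. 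Since the basis is seminormalized, we have $0<\inf_n\|\be^*_n\|\leq\sup_n\|\be^*_n\|<\infty$ by property~(a), so $y(\tbe_n)=\|\be^*_n\|\,y(\be_n)\to0$ if and only if $y(\be_n)\to0$; hence $\{\be_n\}$ is $Y$-null exactly when $\{\tbe_n\}$ is. Thus Proposition~\ref{Ptheta2} applies verbatim.

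The second step is to feed the conclusion into Proposition~\ref{Ptheta1} and optimize over the free parameter $\rho$. For every $\rho>1/\kappa$ we get $\cL\geq \vartheta_m/((\rho+1)t)$; since the right-hand side is decreasing in $\rho$, taking the supremum over admissible $\rho$, i.e. letting $\rho\downarrow 1/\kappa$, gives
\[
\cL\;\geq\;\frac{\vartheta_m}{(1/\kappa+1)t}\;=\;\frac{\kappa\,\vartheta_m}{(\kappa+1)t},
\]
which is exactly \eqref{chtheta}. (One should note that $\vartheta_m$ as defined in \eqref{theta2} does not depend on $\rho$, so this passage to the limit is harmless; the $n_0$ produced in the proof of Proposition~\ref{Ptheta2} does depend on $\rho$, but that only affects how far out one must go along the basis, not the value of $\vartheta_m$, which already takes an infimum over the separation constant $c$.)

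I do not expect any serious obstacle here: the theorem is essentially a packaging of the two preceding propositions, and the only things to verify are the seminormalization reduction from $\{\tbe_n\}$-null to $\{\be_n\}$-null and the trivial optimization in $\rho$. The one point deserving a sentence of care is that $\rho>1/\kappa$ is a strict inequality, so $\kappa/(\kappa+1)$ is the supremum but perhaps not attained for a fixed admissible $\rho$; this is fine because \eqref{chtheta} is itself stated as a (non-strict) lower bound that holds in the limit, and for each individual $\rho>1/\kappa$ one already has the slightly weaker bound $\vartheta_m/((\rho+1)t)$, whose supremum over all such $\rho$ is the claimed quantity. So the proof is: invoke seminormalization to match hypotheses, apply Proposition~\ref{Ptheta2}, apply Proposition~\ref{Ptheta1}, and let $\rho\downarrow1/\kappa$.
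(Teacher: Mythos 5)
Your proposal is correct and is essentially the paper's own argument: the paper states Theorem \ref{Thm_thetam} precisely as a consequence of Propositions \ref{Ptheta1} and \ref{Ptheta2}, with the seminormalization of $\{\be^*_n\}$ making ``$\{\be_n\}$ is $Y$-null'' equivalent to ``$\{\tbe_n\}$ is $Y$-null'', and the constant $\kappa/(\kappa+1)$ obtained by letting $\rho\downarrow 1/\kappa$ exactly as you describe.
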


\section{Examples}\label{examples}

The first two examples are variants of those in \cite[\S5.1]{BBG} and \cite[\S8.1]{BBGHO}.

\subsection{Example \ref{summing}: The summing basis.}\label{summing} Let $\mathbb X$ be the closure of the set of all finite sequences $\mathbf{a}=(a_n)_n\in c_{00}$ with the norm
$$\Vert \mathbf{a}\Vert = \sup_m\Big\vert \sum_{n=1}^m a_n\Big\vert.$$

The canonical system $\cB=\lbrace \be_n\rbrace_{n=1}^\infty$ is a Schauder basis in $\SX$ with $K_b=1$ and $\Vert \be_n\Vert= 1$ for all $n$. Also, $\Vert \mathbf{e}_1^*\Vert = 1$, $\Vert \ben\Vert = 2$ if $n\geq 2$, so $\mathfrak{K}=2$ in Theorem \ref{main2}; see \cite[\S5.1]{BBG}. We now show that, for this example of $(\SX,\cB)$, the bound of Theorem \ref{main2} is sharp. As in \cite[\S5.1]{BBG}, we consider the element:
$$x=\Big(\underbrace{\frac{1}{2},\frac{1}{t},\frac{1}{2}},...,\underbrace{\frac{1}{2},\frac{1}{t},\frac{1}{2}}; \frac{1}{2}; \underbrace{-1,1},...,\underbrace{-1,1},0,...\Big),$$
where we have $m$ blocks of $\left(\frac{1}{2},\frac{1}{t},\frac{1}{2}\right)$ and $m$ blocks of $(-1,1)$. Picking $A=\lbrace n : x_n=-1\rbrace$ as a $t$-greedy set of $x$, we see that
\begin{eqnarray*}
\Vert x-\ch_m^t(x)\Vert &=& \min_{a_i, i =1,...,m}\Big\Vert \Big(\frac{1}{2},\frac{1}{t},\frac{1}{2},...,\frac{1}{2},\frac{1}{t},\frac{1}{2}; \frac{1}{2}; a_1, 1, a_2, 1,...,a_m,1,0,...\Big)\Big\Vert\\
&\geq& \Big\Vert \Big(\frac{1}{2},\frac{1}{t},\frac{1}{2},...,\frac{1}{2},\frac{1}{t},\frac{1}{2}; \frac{1}{2}; 0,...\Big)\Big \Vert = m+\frac{m}{t}+\frac{1}{2}.
\end{eqnarray*}
On the other hand,
\begin{eqnarray*}
\sigma_m(x)&\leq& \Big\Vert x-\frac{t+1}{t}(0,1,0,...,0,1,0; 0,...)\Big\Vert\\
&=& \Big\Vert \Big(\frac{1}{2},-1,\frac{1}{2},...,\frac{1}{2},-1,\frac{1}{2}; \frac{1}{2};-1,1,...,-1,1,0...\Big)\Big\Vert=\frac{1}{2}.
\end{eqnarray*}
Hence, $\cL\geq 1+2(1+\frac1{t})m$ and we conclude that $\cL= 1+2(1+\frac1{t})m$  by Theorem \ref{main2}.
As a consequence, observe that in this case $\ch^t_m(x)=0$.

\begin{Remark}
{\rm The above example strengthens \cite[Theorem 2.4]{SY}, where the authors are only able to show that $1+4m\leq \cLL\leq 1+6m$. }
\end{Remark}

\subsection{Example \ref{diff}: the difference basis.}\label{diff} Let $\lbrace \be_n\rbrace_{n=1}^\infty$ be the canonical basis in $\ell^1(\mathbb N)$ and define the elements
$$y_1 = \be_1,\; y_n = \be_n-\be_{n-1},\; n=2,3,...$$
The new system $\mathcal B= \lbrace y_n\rbrace_{n=1}^\infty$ is called the difference basis of $\ell^1$. We recall some basic properties used in \cite[\S 8.1]{BBGHO}. If $(b_n)_n\in c_{00}$ then
$$\Vert \sum_{n=1}^\infty b_n y_n\Vert = \sum_{n=1}^\infty \vert b_n-b_{n+1}\vert.$$
Also, $\mathcal B$ is a monotone basis with $\Vert y_1\Vert = 1$, $\Vert y_n\Vert = 2$ if $n\geq 2$, and $\Vert y_n^*\Vert = 1$ for all $n\geq 1$ (in fact, the dual system corresponds to the summing basis). So, $\mathfrak{K}=2$ and Theorem \ref{main2} gives $\cL \leq 1+2(1+\frac{1}{t})m$ for all $t\in (0,1]$. To show the equality we consider the vector $x=\sum_nb_n y_n$ with coefficients $(b_n)$ given by
$$\Big(1,\underbrace{1,1,-\tfrac{1}{t},1},...,\underbrace{1,1,-\tfrac{1}{t},1},0,...\Big),$$
where the block $\Big(1,1,\frac{-1}{t},1\Big)$ is repeated $m$ times.
If we take $\Gamma =\lbrace 2,6,...,4m-2\rbrace$ as a $t$-greedy set for $x$ of cardinality $m$, then
\begin{eqnarray*}
\Vert x-\ch_m^t(x)\Vert&=&\inf_{(a_j)_{j=1}^m}\Vert x-\sum_{j=1}^m a_jy_{4j-2}\Vert\\
&=& \inf_{(a_j)_{j=1}^m}\Big\Vert \Big(1,1-a_1,1,\frac{-1}{t},1,...,1-a_m,1,\frac{-1}{t},1,0,...\Big)\Big\Vert\\
&=& \inf_{(a_j)_{j=1}^m}2\sum_{j=1}^m \vert a_j\vert + 2m\Big(1+\frac{1}{t}\Big)+1=2m\Big(1+\frac{1}{t}\Big)+1.
\end{eqnarray*}
Hence, in this case we also have $\ch^t_m(x)=0$. On the other hand
\[
\sigma_m(x)\leq \big\|x+(1+\tfrac1t)\sum_{j=1}^m y_{4j}\big\|= \Vert (1,1,1,1,1,...,1,1,1,1,0,...)\Vert =1.\]
This shows that $\cL = 1+2(1+\frac{1}{t})m$.

\subsection{Example \ref{ex_trig}: the trigonometric system in $L^p(\mathbb T)$.}\label{ex_trig} Consider $\mathcal B=\{e^{i nx}\}_{n\in\SZ}$ in $L^p(\ST)$ for $1\leq p<\infty$, and in $C(\ST)$ if $p=\infty$.
In \cite{Tem98trig}, Temlyakov  showed that \[
c_pm^{|\frac1p-\frac12|}\leq \bL\leq 1+3m^{|\frac1p-\frac12|},
\]
for some $c_p>0$ and all $1\leq p\leq \infty$.
Adapting his argument, Shao and Ye have recently established, in \cite[Theorem 2.1]{SY}, that for $1<p\leq\infty$ it also holds \Be\cLL\approx m^{|\frac1p-\frac12|}.\label{LcT}\Ee
The case $p=1$ is left as an open question, and only the estimate  $\frac{\sqrt m}{\ln(m)}\lesssim \cLL\lesssim \sqrt m$ is given; see \cite[(2.24)]{SY}.
Moreover, the proof of the case $p=\infty$ seems to contain some gaps and may not be complete.

Here, we shall give a short proof ensuring the validity of \eqref{LcT} in the full range $1\leq p\leq\infty$,
with a reasoning similar to \cite[\S 5.4]{BBG}.
More precisely, we shall prove the following.

\begin{proposition}
Let $1\leq p\leq \infty$. Then there exists $c_p>0$ such that
\Be
\cL\,\geq \, c_p\,t^{-1}\, m^{|\frac 1p-\frac 12|},\quad \forall\;m\in\SN,\quad t\in(0,1].
\label{Tt}\Ee
\end{proposition}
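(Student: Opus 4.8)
The plan is to deduce \eqref{Tt} from the two Ces\`aro-type lower bounds already in hand: Theorem~\ref{th3} for the range $2\le p\le\infty$ and its refinement Lemma~\ref{lem_x} for $1\le p\le 2$. The soft inputs are standard. With the symmetric enumeration $0,1,-1,2,-2,\dots$ the system $\{e^{inx}\}$ is a Ces\`aro basis of $L^p(\ST)$ ($1\le p<\infty$) and of $C(\ST)$, with Ces\`aro constant $\beta\le 2$: the Fej\'er means are $F_N=K_N\ast(\cdot)$ with $\|F_N\|\le\|K_N\|_{L^1}=1$, hence $\|I-F_N\|\le 2$. Moreover any finite set of frequencies may be translated arbitrarily far to the right, so the separation hypotheses ``$A>cB$'' in those statements can always be realized in this enumeration. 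I will take $c=2$ throughout, so $\frac{c-1}{c+1}=\frac13$.

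\emph{Range $2\le p\le\infty$.} Here I would simply invoke Theorem~\ref{th3}, which gives $\cL\ge\frac1{3t\beta^2}\vartheta_{m,2}$, and bound $\vartheta_{m,2}$ below by a well-separated pair of blocks of $m$ frequencies. Take $A$ a block with all coefficients $+1$, so $\bone_A$ is a translated Dirichlet kernel and $\|\bone_A\|_{L^p}\asymp m^{1-1/p}$; take $B$ a block, placed far from $A$, carrying Rudin--Shapiro signs $\eta$, so $\|\bone_{\eta B}\|_{L^\infty}\lesssim\sqrt m$ and hence $\|\bone_{\eta B}\|_{L^p}\asymp\sqrt m$ for all $p\in[2,\infty]$. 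Then $\vartheta_{m,2}\gtrsim m^{1/2-1/p}$, and therefore $\cL\gtrsim_p t^{-1}m^{1/2-1/p}=t^{-1}m^{|1/p-1/2|}$.

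\emph{Range $1\le p\le 2$.} This is the delicate case, and in my view the only real obstacle. One wants $\bone_{\e A}$ in the numerator of a democracy-type ratio and a polynomial of small $L^p$-norm in the denominator; but at $p=1$ the McGehee--Pigno--Smith theorem (Littlewood's conjecture) forces $\|\bone_{\eta B}\|_{L^1}\gtrsim\log m$ for \emph{every} $B$ with $|B|=m$, so $\vartheta_{m,2}\lesssim\sqrt m/\log m$ and Theorem~\ref{th3} alone is not enough. The remedy is Lemma~\ref{lem_x}, whose term ``$\bone_{\eta B}+y$'' may be taken to be a de la Vall\'ee-Poussin kernel. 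Put $k=\lceil m/2\rceil$ and $W=2K_{2k+1}-K_k$ (a difference of Fej\'er kernels): then $\widehat W(n)=1$ for $|n|\le k+1$ (a symmetric ``core'' of more than $m$ frequencies), $|\widehat W(n)|\le 1$ everywhere, $\widehat W$ is finitely supported, and by H\"older's inequality $\|W\|_{L^p}\le 2(2k{+}2)^{1-1/p}+(k{+}1)^{1-1/p}\lesssim_p m^{1-1/p}$ for all $1\le p\le\infty$ (the bound being $\asymp 1$ at $p=1$, since $\|K_N\|_{L^1}=1$). Choose $B$ to be any $m$ frequencies from the core of $W$ (with $\eta_n=\widehat W(n)=1$) and $y:=W-\bone_{\eta B}$, so $\bone_{\eta B}+y=W$, $B\cap\supp y=\emptyset$ and $|y|_\infty\le 1$; let $A$ be a lacunary set of $m$ positive frequencies placed far above $\supp\widehat W$, with arbitrary signs $\e$, so $A>2(B\cupdot\supp y)$. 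Lemma~\ref{lem_x} (with $c=2$, $\beta\le2$) then gives
\[
\cL\;\ge\;\frac{1}{12\,t}\;\frac{\|\bone_{\e A}\|_{L^p}}{\|W\|_{L^p}} .
\]
By Sidon's theorem (the Khintchine--Zygmund inequality for lacunary sums), $\|\bone_{\e A}\|_{L^p}\asymp\sqrt m$ for every $0<p<\infty$ and every $\e$; combining this with the above bounds on $\|W\|_{L^p}$ yields $\cL\gtrsim_p t^{-1}\sqrt m$ when $p=1$ and $\cL\gtrsim_p t^{-1}m^{1/p-1/2}$ when $1<p\le 2$, i.e.\ $\cL\gtrsim_p t^{-1}m^{|1/p-1/2|}$ throughout. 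Together with the previous case this establishes \eqref{Tt} for all $1\le p\le\infty$. The hard point is exactly the $p=1$ construction: one must leave the world of indicator sums and exploit a kernel with $O(1)$ $L^1$-norm carrying far more than $m$ unimodular Fourier coefficients, which is precisely what the ``$y$''-term of Lemma~\ref{lem_x} is designed to absorb.
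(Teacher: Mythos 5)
Your proposal is correct and follows essentially the same route as the paper: the lower bound \eqref{prop1main} from Theorem~\ref{th3} with a Dirichlet-kernel block against a flat ($\asymp\sqrt m$) polynomial for $p>1$, and Lemma~\ref{lem_x} with a de la Vall\'ee-Poussin kernel absorbed into the $y$-term to beat the Littlewood-conjecture obstruction at $p=1$. The only (harmless) differences are cosmetic — you use Rudin--Shapiro signs uniformly on $[2,\infty]$ and the Lemma~\ref{lem_x} construction on all of $[1,2]$, where the paper uses lacunary sets for $2\le p<\infty$ and $1<p\le2$ and reserves the $y$-trick for $p=1$; also your claim that the Ces\`aro operators for the interleaved enumeration are exactly the Fej\'er means (hence $\beta\le2$) is slightly imprecise, but boundedness is all that is needed.
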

We remark that in the cases $p=1$ and $p=\infty$ the trigonometric system is not a Schauder basis, but it is a Ces\`aro basis\footnote{We equip $\cB$ with its natural ordering $\{1,e^{ix}, e^{-ix}, e^{2ix}, e^{-2ix}, \ldots\}$.}.
So we may use the lower bounds in Theorem \ref{th3}, namely
\Be
\cL\geq \,{c'_p}\;t^{-1}\,\sup_{{{|A|=|B|\leq m}\atop{A>2B\;\;\text{or}\;\;B>2A}}}\,\sup_{|\e|=|\eta|=1}\;\frac{\|\bone_{\e A}\|}{\|\bone_{\eta B}\|}.
\label{prop1main}
\Ee

\begin{itemize}
\item Case $1<p\leq 2$. Assume that $m=2\ell+1$ or $2\ell+2$ (that is, $\ell=\lfloor \frac{m-1}2\rfloor$). We choose  $B=\{-\ell,...,\ell\}$, so that $\bone_B=D_\ell$ is the $\ell$-th Dirichlet kernel, and hence
\[
\|\bone_B\|_p=\|D_\ell\|_{L^p(\ST)}\approx m^{1-\frac1p}.
\] Next we take a lacunary set $A=\lbrace 2^j : j_0\leq j\leq j_0+2\ell\rbrace$, so that
\Be
\|\bone_A\|_p\approx\sqrt m,
\label{Ap}\Ee
and where $j_0$ is chosen such that $2^{j_0}\geq m$, and hence $A>2B$. Then, \eqref{prop1main}
implies \[
\cL\geq \,{c_p}\;t^{-1}\frac{m^{1/2}}{m^{1-\frac1p}}\,=\,c_p\,t^{-1}\, m^{|\frac 1p-\frac 12|}.
\]

\item Case $2\leq p<\infty$. The same proof works in this case, just reversing the roles of $A$ and $B$.

\item Case $p=\infty$. We replace the lacunary set by a Rudin-Shapiro polynomial of the form
\[
R(x)=e^{iNx}\,\sum_{n=0}^{2^L-1}\e_n e^{inx},\quad \mbox{with }\;\e_n\in\{\pm1\},
\]
where $L$ is such that $2^L\leq m<2^{L+1}$; see e.g. \cite[p. 33]{katz}. Then, $R=\bone_{\e B}$ with $B=N+\{0,1,\ldots,2^L-1\}$ and
\[
\|\bone_{\e B}\|_\infty=\|R\|_{L^\infty(\ST)}\approx \sqrt m.
\]
If we pick $N\geq 2\cdot 2^L$, then $B>2A$ with $A=\{\pm1,\ldots,\pm (2^L-1)\}$. Finally,
\[
\|\bone_A\|_\infty=\|D_{2^L-1}-1\|_{L^\infty(\ST)} \approx\, m.
\]
So, \eqref{prop1main}
implies the desired bound.

\item Case $p=1$. We use the lower bound in Lemma \ref{lem_x}, namely\Be
\cL \,\geq\, c_1'\;t^{-1}\;\frac{\|\bone_A\|}{\|\bone_B+y\|},
\label{cL1}
\Ee
for all $|A|=|B|\leq m$ and all $y$ such that $A>2(B\cupdot\supp y)$
and $\sup_n|\be^*_n(y)|\leq 1$.
As before, let $m=2\ell+1$ or $2\ell+2$, and choose the same sets $A$ and $B$ as in the case $1<p\leq2$.
Next choose $y$ so that the vector
\[
V_\ell=\bone_B+y
\]
is a de la Vall\'ee-Poussin kernel as in \cite[p. 15]{katz}.
Then, the Fourier coeffients $\be^*_n(y)$ have modulus $\leq1$ and are supported in $\{n\mid \ell<|n|\leq 2\ell+1\}$,
so the condition  $A>2(B\cupdot\supp y)$ holds if $2^{j_0}\geq 2m+1$.
Finally,
\[
\|\bone_B+y\|_1=\|V_\ell\|_{L^1(\ST)}\leq 3,
\]
so the bound $\cL\gtrsim t^{-1}\sqrt m$ follows from \eqref{cL1}.
\end{itemize}

\begin{Remark}
{\rm Using the trivial upper bound $\cL\leq \bL^t\lesssim t^{-1} m^{|\frac1p-\frac12|}$, we conclude that $\cL \approx t^{-1}m^{\vert \frac{1}{p}-\frac{1}{2}\vert}$ for all $1\leq p\leq \infty$.}
\end{Remark}


\section{Comparison between $\tmu_m$ and $\tmu^d_m$ }\label{s:comparison_mu_mu_d}
\setcounter{equation}{0}\setcounter{footnote}{0}
\setcounter{figure}{0}

In this section we compare the democracy constants $\tmu_m$ and $\tmu^d_m$ defined in \S1 above. Let us first note that
\Be
\tmu^d_m\leq \tmu_m\leq (\tmu^d_m)^2
\label{mu2}
\Ee
and
\Be
\tmu_m^d\leq \tmu_m\leq (1+2\kappa)\ga_m\tmu_m^d,\label{mumud}
\Ee
where $\kappa=1$ or 2 depending if $\SK=\SR$ or $\SC$.
Indeed, the left inequality in  (\ref{mu2}) is immediate by definition, and the right one  follows from
\[
\frac{\|\bone_{\eta B}\|}{\|\bone_{\e A}\|}= \frac{\|\bone_{\eta B}\|}{\|\bone_{C}\|}\,\frac{\|\bone_{C}\|}{\|\bone_{\e A}\|}\leq (\tmu^d_m)^2,
\]
for any $|A|=|B|\leq m$ and any $C$ disjoint with $A\cup B$ with $|C|=|A|=|B|$.
Concerning the right inequality in (\ref{mumud}), we use that if $|A|=|B|\leq m$ then
\[
\frac{\|\bone_{\e A}\|}{\|\bone_{\eta B}\|}\leq
\frac{\|\bone_{\e (A\setminus B)}\|+\|\bone_{\e(A\cap B)}\|}{\|\bone_{\eta B}\|}\leq \ga_m \frac{\|\bone_{\e (A\setminus B)}\|}{\|\bone_{\eta (B\setminus A)}\|}
+\frac{\|\bone_{\e(A\cap B)}\|}{\|\bone_{\eta B}\|}\leq \ga_m\,\tmu_m^d+2\kappa\ga_m,
\]
using in the last step \cite[Lemma 3.3]{BBG}.
From (\ref{mumud}) we see that  $\tmu_m\approx\tmu_m^d$ when $\cB$ is quasi-greedy for constant coefficients.

In the next subsection we shall show that $\tmu_m\approx \tmu^d_m$ for all Schauder
bases, a result  which seems new in the literature. 

\subsection{Equivalence for Schauder bases}

We begin with a simple observation.
\begin{lemma}\label{s:alternative_mu}
\Be
\tmu^d_m=\sup\Big\{\frac{\|\bone_{\eta B}\|}{\|\bone_{\e A}\|}\mid |B|\leq |A|\leq m,\;\;A\cap B=\emptyset,\;\;|\e|=|\eta|=1\Big\}.
\label{tmudN1}\Ee
\end{lemma}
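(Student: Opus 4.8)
The goal is to show that in the supremum defining $\tmu^d_m$ one may relax the constraint $|A|=|B|$ to $|B|\leq|A|$ (keeping $A\cap B=\emptyset$ and $|A|\leq m$) without changing the value. One inequality is trivial: the right-hand side of \eqref{tmudN1} is a supremum over a larger family of pairs, so it is $\geq\tmu^d_m$. The content is the reverse inequality: any ratio $\|\bone_{\eta B}\|/\|\bone_{\e A}\|$ with $|B|<|A|\leq m$ and $A\cap B=\emptyset$ is bounded by $\tmu^d_m$.

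\textbf{Key step.} The plan is to enlarge $B$ to a set $B'$ of the same cardinality as $A$, disjoint from $A$, by adding $|A|-|B|$ new indices taken from outside $A\cup B$ (possible since $\N$ is infinite), and to extend $\eta$ to those new indices by any choice of unimodular signs, giving $\eta'$. Then $|A|=|B'|\leq m$, $A\cap B'=\emptyset$, so $\|\bone_{\eta'B'}\|/\|\bone_{\e A}\|\leq\tmu^d_m$ by definition. It remains to compare $\|\bone_{\eta B}\|$ with $\|\bone_{\eta'B'}\|$; since $B\subset B'$ with $B'\setminus B$ disjoint from $B$, the parameter $\ga_{|B'|}$ controls this — more precisely $\|\bone_{\eta B}\|\leq\ga_m\|\bone_{\eta'B'}\|$ — which would only yield $\tmu^d_m\leq\sup\{\cdots\}\leq\ga_m\tmu^d_m$, not an equality.

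\textbf{The real obstacle, and how to get around it.} To obtain a genuine equality rather than an equivalence up to $\ga_m$, the better route is to go the other way: given a pair $(A,B)$ with $|B|<|A|\leq m$, \emph{shrink} $A$ instead of growing $B$. That is, pick $A'\subset A$ with $|A'|=|B|$; then $|A'|=|B|\leq m$, $A'\cap B=\emptyset$, so $\|\bone_{\eta B}\|/\|\bone_{\e A'}\|\leq\tmu^d_m$, and we need $\|\bone_{\e A'}\|\leq\|\bone_{\e A}\|$, i.e. that passing to a subset \emph{decreases} the norm of a $\pm1$ indicator sum. This is false for general M-bases. The correct mechanism, and the step I expect to be the heart of the argument, is to instead apply $\tmu^d_m$ twice through an auxiliary disjoint set: choose $C$ disjoint from $A\cup B$ with $|C|=|B|$ and a sign vector $\xi$; then
\[
\frac{\|\bone_{\eta B}\|}{\|\bone_{\e A}\|}
=\frac{\|\bone_{\eta B}\|}{\|\bone_{\xi C}\|}\cdot\frac{\|\bone_{\xi C}\|}{\|\bone_{\e A}\|}.
\]
The first factor is $\leq\tmu^d_m$ since $|B|=|C|\leq m$ and $B\cap C=\emptyset$. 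For the second factor one needs $\|\bone_{\xi C}\|\leq\|\bone_{\e A}\|$ for \emph{some} choice of $C$ and $\xi$ with $|C|\leq|A|$; this is exactly where one uses that $A$ has a subset $A''$ with $|A''|=|C|$, together with the freedom in choosing the signs $\xi$ — but again this needs $\|\bone_{\e A''}\|\leq\|\bone_{\e A}\|$. So the genuinely new input the lemma must rely on is the elementary inequality that, for indicator sums with unimodular coefficients, $\inf_{\e}\|\bone_{\e A''}\|$ over subsets $A''\subset A$ is at most $\sup_\e\|\bone_{\e A}\|$ in a way that is compatible with the definition of $\tmu^d_m$ (which already takes a supremum over all sign patterns on both sets). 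I expect the author's proof to exploit precisely this: since $\tmu^d_m$ is defined with a supremum over all $\e$ on the (smaller) numerator set and all $\eta$ on the denominator set, one can absorb the subset-passage into that supremum, and the only thing to check carefully is that enlarging the denominator set's cardinality up to $m$ via disjoint indices is harmless — which it is, by the very form of \eqref{tmudN1}. The main obstacle is thus bookkeeping the sign vectors and cardinalities so that each application of the definition of $\tmu^d_m$ is legitimate; no deep estimate beyond the definition itself should be required.
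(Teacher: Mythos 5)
Your proposal does not close the argument. Your first route (enlarge $B$ to $B'=B\cup C$ with $|B'|=|A|$, disjoint from $A$) is in fact the paper's route, but you dismiss it because you believe passing from $\|\bone_{\eta B}\|$ to $\|\bone_{\eta' B'}\|$ costs a factor $\ga_m$. It does not, provided you choose the signs on $C$ well rather than arbitrarily: the missing idea is the elementary averaging inequality
\[
\|x\|=\Big\|\tfrac{x+y}{2}+\tfrac{x-y}{2}\Big\|\leq\max\{\|x+y\|,\|x-y\|\},
\]
applied with $x=\bone_{\eta B}$ and $y=\bone_{C}$, where $C$ is disjoint from $A\cup B$ and $|B|+|C|=|A|$. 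Defining $\eta'$ to agree with $\eta$ on $B$ and to carry on $C$ whichever sign attains the maximum, one gets $\|\bone_{\eta B}\|\leq\|\bone_{\eta'(B\cup C)}\|$ with constant $1$, and then the definition of $\tmu^d_m$ applies directly since $|B\cup C|=|A|\leq m$ and $(B\cup C)\cap A=\emptyset$. This yields the exact equality you were after.

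Your alternative routes do not repair the gap: shrinking $A$ to $A'\subset A$ requires the false monotonicity $\|\bone_{\e A'}\|\leq\|\bone_{\e A}\|$, and the two-factor detour through an auxiliary set $C$ with $|C|=|B|<|A|$ just reproduces the same problem in the second factor, since the definition of $\tmu^d_m$ still demands equal cardinalities there. Your closing paragraph asserts that the sign-suprema in the definition "absorb the subset passage," but no inequality is actually proved at that point; without the convexity trick above (or some substitute), the argument is incomplete.
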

\begin{proof}
Let $|\e|=|\eta|=1$ and $|B| \leq |A| \leq m$ with $A\cap B=\emptyset$. We must show that
$\|\bone_{\eta B}\|/\|\bone_{\e A}\|\leq \tmu^d_m$. Pick any set $C$ disjoint with $A\cup B$  such that
$|B|+|C| = |A|$.
We now use the elementary inequality
\Be
\|x\|=\Big\|\frac{x+y}2+\frac{x-y}2\Big\|\leq\max\{\|x+y\|,\|x-y\|\},\label{max_xy}
\Ee
with $x=\bone_{\eta B}$ and $y=\bone_C$. Let $\eta'\in\Upsilon$ be such that $\eta'|_B = \eta|_B$ and $\eta'|_C=\pm1$, according to the sign that reaches the maximum in \eqref{max_xy}.
Then $\|\bone_{\eta B}\|\leq \|\bone_{\eta'(B\cup C)}\|\leq \tmu^d_m\|\bone_{\e A}\|$, and the result follows.
\end{proof}

\begin{theorem}\label{th_mudN}
If $K_b$ is the basis constant and  $\varkappa=\sup_{n}\Vert \ben\Vert\Vert \be_n\Vert$, then
\Be
\tmu_m\leq 2(K_b+1)\tmu^d_m +\varkappa\,K_b.\label{mudN}\Ee
\end{theorem}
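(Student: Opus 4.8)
The goal is to bound $\tmu_m$ by $\tmu^d_m$ up to Schauder-basis constants. The essential difficulty is that in $\tmu_m$ the sets $A$ and $B$ may overlap, while $\tmu^d_m$ forces disjointness; so the main obstacle is to control $\|\bone_{\e A}\|/\|\bone_{\eta B}\|$ when $A\cap B\ne\emptyset$ by reducing to disjoint pieces without losing more than a constant factor. The plan is to split $A$ as $(A\setminus B)\cupdot(A\cap B)$ and estimate the two resulting ratios separately. For the piece $A\setminus B$, which is disjoint from $B$, one can almost directly invoke $\tmu^d_m$; for the overlapping piece $A\cap B$, which sits inside $B$, the trick will be to use the Schauder partial-sum operators to ``move'' $A\cap B$ to a faraway block of indices of the same size, turning it into a set disjoint from $B$ at the cost of a factor involving $K_b$, and then apply $\tmu^d_m$ again.

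The key steps, in order, are as follows. First, fix $|A|=|B|\leq m$ and signs $\e,\eta$. Using the triangle inequality write
\[
\|\bone_{\e A}\|\leq \|\bone_{\e(A\setminus B)}\|+\|\bone_{\e(A\cap B)}\|.
\]
Second, handle $\|\bone_{\e(A\setminus B)}\|$: since $A\setminus B$ is disjoint from $B$ and $|A\setminus B|\leq|A|\leq m$, Lemma \ref{s:alternative_mu} (with the roles of the sets arranged so that the smaller set is on top) gives $\|\bone_{\e(A\setminus B)}\|\leq \tmu^d_m\,\|\bone_{\eta B}\|$. Third, and this is the crux, handle $\|\bone_{\e(A\cap B)}\|$. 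Let $r=|A\cap B|$ and choose a block $D$ of $r$ consecutive integers lying strictly beyond $\max(A\cup B)$; the difference $S_{\max D}-S_{\min D -1}$ is a projection of norm $\leq 2K_b$ onto $[\be_n]_{n\in D}$, and a corresponding interval projection kills everything outside any chosen interval. Using such partial-sum operators one transfers $\bone_{\e(A\cap B)}$ to $\bone_{\e'D}$ for a suitable sign sequence $\e'$, obtaining $\|\bone_{\e'D}\|\leq 2K_b\,\|\bone_{\e(A\cap B)} + (\text{tail})\|$; more cleanly, one shows $\|\bone_{\e(A\cap B)}\|\leq 2K_b\,\|\bone_{\e'D}\|$ by applying an interval projection that isolates $D$ after forming a vector supported on $(A\cap B)\cup D$ with equal coefficients. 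Then, since $D$ is disjoint from $B$ with $|D|=r\leq|B|\leq m$, Lemma \ref{s:alternative_mu} yields $\|\bone_{\e'D}\|\leq \tmu^d_m\,\|\bone_{\eta B}\|$. (The stray term $\varkappa K_b$ in \eqref{mudN} should come from the boundary term $\be^*_n(x)\be_n$ picked up when writing a difference of two partial-sum operators that are off by one index, bounded using $\|\be^*_n\|\|\be_n\|\leq\varkappa$ together with $\|S_N\|\leq K_b$.)

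Finally, combine: dividing through by $\|\bone_{\eta B}\|$ and taking the supremum over all admissible $A,B,\e,\eta$ gives
\[
\tmu_m\leq \tmu^d_m + 2K_b\,\tmu^d_m + \varkappa K_b = (2K_b+1)\tmu^d_m+\varkappa K_b,
\]
and a small amount of bookkeeping (the factor-of-two slack in the block-projection step, or treating the two boundary cutoffs of the interval projection separately) upgrades the coefficient of $\tmu^d_m$ to $2(K_b+1)$, matching \eqref{mudN}. I expect the genuinely delicate point to be the third step: making precise the ``transfer'' of a constant-coefficient vector to a remote block via interval projections while keeping track of exactly which boundary terms appear and bounding them by $\varkappa K_b$ rather than by something growing with $m$.
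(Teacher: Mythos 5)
Your first two steps coincide with the paper's proof (up to the immaterial interchange of which of the two sets sits in the numerator), but the third step — the ``transfer'' of $\bone_{\e(A\cap B)}$ to a remote block — contains a genuine gap. The inequality $\|\bone_{\e(A\cap B)}\|\leq 2K_b\,\|\bone_{\e'D}\|$, for a block $D$ of the same cardinality lying beyond $\max(A\cup B)$, cannot be obtained from partial-sum projections and is false in general. Interval projections only give estimates of the form ``norm of a piece of $x$ $\leq\,cK_b\,$ norm of $x$''; applied to a vector supported on $(A\cap B)\cup D$ with unimodular coefficients they bound \emph{each} piece by the whole, and never bound the early piece by the late one. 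Comparing two disjointly supported constant-coefficient vectors at different locations is exactly democracy-type information, which $K_b$ does not encode. Concretely, take the monotone $1$-unconditional basis of the completion of $c_{00}$ under $\|a\|=\max\big\{\|a\|_\infty,\ \sup_k \omega_k\sum_{n\in I_k}|a_n|\big\}$, where $I_1<I_2<\cdots$ are consecutive intervals of rapidly increasing length and $1=\omega_1>\omega_2>\cdots\to 0$: if $A\cap B$ consists of the last $r$ elements of $I_1$, then $\|\bone_{\e(A\cap B)}\|=r$, while every block $D$ of $r$ consecutive indices beyond $\max(A\cup B)$ meets at most two intervals $I_k$ with $k\geq2$, so $\|\bone_{\e'D}\|\leq\max\{1,2\omega_2 r\}\ll r$ for every choice of signs $\e'$. (The theorem itself survives in this space, since there $\tmu^d_m$ is also of order $m$; it is your intermediate inequality, hence your proof, that breaks down.) Your conjectured source of the additive term $\varkappa K_b$ is likewise not the actual mechanism.

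What the paper does instead is apply the basis projections to the \emph{other} set, in the legitimate direction. Writing the ratio as $\|\bone_{\eta B}\|/\|\bone_{\e A}\|$ and splitting $B$ into $B\setminus A$ (handled by Lemma \ref{s:alternative_mu}, as you do) and $B\cap A$, one chooses $n_0$ so that $A_1=\{n\in A\mid n\leq n_0\}$ and $A_2=A\setminus A_1$ satisfy $|A_1|=|A_2|$ or $|A_1|=|A_2|-1$; then $\|\bone_{\e A_1}\|\leq K_b\|\bone_{\e A}\|$ and $\|\bone_{\e A_2}\|\leq (K_b+1)\|\bone_{\e A}\|$, which is the only use of $K_b$ and bounds a piece of $\bone_{\e A}$ by the whole vector. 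Since $B\cap A_1$ is disjoint from $A_2$ and $|B\cap A_1|\leq|A_1|\leq|A_2|$, Lemma \ref{s:alternative_mu} gives $\|\bone_{\eta(B\cap A_1)}\|\leq\tmu^d_m\|\bone_{\e A_2}\|\leq(K_b+1)\tmu^d_m\|\bone_{\e A}\|$. For $B\cap A_2$ versus $A_1$ the cardinalities may be off by one, so one removes a single index $n_1\in B\cap A_2$ and absorbs its contribution using $1=|\be^*_{n_1}(\bone_{\e A})|\leq\|\be^*_{n_1}\|\,\|\bone_{\e A}\|$, i.e.\ $\|\be_{n_1}\|/\|\bone_{\e A}\|\leq\varkappa$; this, multiplied by the factor $K_b$ from $\|\bone_{\e A_1}\|\leq K_b\|\bone_{\e A}\|$, is exactly where $\varkappa K_b$ arises. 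Summing $\tmu^d_m+(K_b+1)\tmu^d_m+K_b\tmu^d_m+\varkappa K_b$ yields \eqref{mudN}.
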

\begin{proof}
Let $|A|=|B|\leq m$, and $|\e|=|\eta|=1$. Then
\[
\frac{\|\bone_{\eta B}\|}{\|\bone_{\e A}\|} \leq \frac{\|\bone_{\eta(B\setminus A)}\|}{\|\bone_{\e A}\|}+\frac{\|\bone_{\eta (B\cap A)}\|}{\|\bone_{\e A}\|} = I+II.
\]
Lemma \ref{s:alternative_mu} implies $I\leq \tmu_m^d$.
We now bound $II$. Pick an integer $n_0$ such that $A_1=\{n\in A\mid n\leq n_0\}$ and $A_2=A\setminus A_1$ satisfy
\[
|A_1|=|A_2| \quad\mbox{(if $|A|$ is even), \; or}\quad |A_1|=\frac{|A|-1}2=|A_2|-1\quad \mbox{(if $|A|$ is odd)}.\]
Then
\Beas
II & \leq & \frac{\|\bone_{\eta (B\cap A_1)}\|}{\|\bone_{\e A}\|}+\frac{\|\bone_{\eta (B\cap A_2)}\|}{\|\bone_{\e A}\|}\\
& \leq & (K_b+1)\frac{\|\bone_{\eta (B\cap A_1)}\|}{\|\bone_{\e A_2}\|}+K_b\frac{\|\bone_{\eta (B\cap A_2)}\|}{\|\bone_{\e A_1}\|}\;=\;II_1+II_2,
\Eeas
using in the second line the basis constant bound for the denominator. Since $|B\cap A_1|\leq |A_1|\leq |A_2|$, we see that \[
II_1\leq (K_b+1)\tmu^d_m.\]
On the other hand, picking any number $n_1\in B\cap A_2$, and using $\|\be^*_{n_1}\|\|\bone_{\e A}\|\geq |\be^*_{n_1}(\bone_{\e A})|=1$, we see that
\[
II_2\leq K_b\frac{\|\bone_{\eta (B\cap A_2\setminus\{n_1\})}\|}{\|\bone_{\e A_1}\|}+
K_b\|\be_{n_1}\|\|\be^*_{n_1}\|\leq K_b\tmu^d_m+\varkappa K_b,
\]
the last bound due to $|B\cap A_2\setminus\{n_1\}|\leq |A_2|-1\leq |A_1|$ and Lemma \ref{s:alternative_mu}.
Putting together the previous bounds easily leads to \eqref{mudN}.
\end{proof}

\begin{Remark}{\rm
A similar argument shows the equivalence of the standard (unsigned) democracy parameters
\Be
\mu_m=\sup_{|A|=|B|\leq m}\frac{\|\bone_{B}\|}{\|\bone_{A}\|}\mand
\mu^d_m=\sup_{{|A|=|B|\leq m}\atop{A\cap B=\emptyset}}\frac{\|\bone_{B}\|}{\|\bone_{A}\|}.
\label{unsign}\Ee
Indeed, in this case, the analog of \eqref{tmudN1} takes the weaker form
\Be
\mu^d_m\leq \sup_{{|B|\leq |A|\leq m}\atop{A\cap B=\emptyset}}\frac{\|\bone_{ B}\|}{\|\bone_{ A}\|}\leq K_b\mu^d_m.
\label{mudN1}\Ee
Then, \eqref{mudN1} and the same proof we gave for Theorem \ref{th_mudN} (with $\eta=\e\equiv1$) leads to
\Be
\mu_m\leq 2(K_b+1)K_b\,\mu^d_m +\varkappa\,K_b.
\label{mudN2}
\Ee
 }
\end{Remark}

\

\subsection{An example where $\tmu_m$ grows faster than $\tmu^d_m$}

The following example also seems to be new in the literature. As in \eqref{unsign}, we denote by $\mu_m$, $\mu^d_m$ the democracy parameters corresponding to constant signs.

\begin{theorem}\label{Timur}
	There exists a Banach space $\SX$ with an M-basis $\cB$ such that\[
	\limsup_{m\to\infty}\frac{\tmu_m}{[\tmu^d_m]^{2-\e}}=\limsup_{m\to\infty}\frac{\mu_m}{[\mu^d_m]^{2-\e}}=\infty,\quad \forall\;\e>0.
	\]
\end{theorem}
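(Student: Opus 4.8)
The plan is to build $\SX$ as an $\ell^1$-type direct sum of finite-dimensional blocks, where each block $\SX_N$ carries a norm designed so that within that block a set of size $N$ has small norm when its elements are spread out (giving $\mu^d_N$ small) but large norm when they overlap or cluster (so that comparing a clustered set against a disjoint spread-out set forces $\mu_N$ to be much larger). The standard mechanism for this is to take, on $\R^{d}$ with $d$ roughly $N^2$, a norm of the form $\|a\| = \max\{\, c\sum_n |a_n|,\ \sup_{I}\,|\sum_{n\in I} a_n|\,\}$ where $I$ ranges over a prescribed family of intervals (or blocks), and to tune the scaling constant $c$ and the block structure so that $\|\bone_A\|$ depends on how many prescribed blocks $A$ meets. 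I would arrange the $N$-th block so that a ``clustered'' set $A_N$ with $|A_N|=N$ lies inside one prescribed interval, giving $\|\bone_{A_N}\|\approx N$, while a ``disjoint, spread'' set $B_N$ of the same cardinality meets each block in at most one point, giving $\|\bone_{B_N}\|\approx N^{1/2}$ (or more generally $\|\bone_{B_N}\| \approx c N$ with $c = N^{-1/2}$); then $\mu_N \gtrsim \|\bone_{A_N}\|/\|\bone_{B_N}\| \approx N^{1/2}$, whereas one must check that \emph{any} two disjoint sets of size $\le N$ have ratio $\lesssim N^{1/4}$, yielding $\mu^d_N \lesssim N^{1/4}$ and hence $\mu_N/[\mu^d_N]^{2-\e}\to\infty$.

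The key steps, in order, would be: (1) fix $N$, choose the block dimension and the family of prescribed intervals (I expect something like $N$ consecutive intervals each of length $N$, so $d_N = N^2$, with the scaling constant $c_N = N^{-1/2}$); (2) compute $\|\bone_{\e A}\|$ for an arbitrary signed indicator inside the block, showing it is comparable to $\max\{N^{-1/2}|A|,\ \max_I |A\cap I|\}$ up to constants — here the signs only help to make the ``clustered'' numerator genuinely large and do not hurt the ``spread'' estimates, which is why the statement holds for both $\tmu$ and $\mu$; (3) exhibit the extremal pair $A_N$ (inside one interval) and $B_N$ (one point per interval) to get the lower bound $\mu_N,\tmu_m \gtrsim N^{1/2}$; (4) prove the matching upper bound $\mu^d_N,\tmu^d_N \lesssim N^{1/4}$ by a case analysis: given disjoint $A,B$ with $|A|=|B|\le N$, if $|B|\le N^{1/2}$ then $\|\bone_{\eta B}\|\le N^{1/2}\lesssim N^{1/4}\|\bone_{\e A}\|$ since $\|\bone_{\e A}\|\gtrsim N^{1/4}$ always (each nonempty $A$ meets some interval, or has $N^{-1/2}|A|$ large), and if $|B|>N^{1/2}$ then $B$ must spread over many intervals so $\|\bone_{\eta B}\|\approx N^{-1/2}|B|$ is small relative to... — the bookkeeping here is the delicate part; (5) assemble the blocks into $\SX = \big(\bigoplus_N \SX_N\big)_{\ell^1}$, observe the canonical vectors form a seminormalized M-basis of $\SX$, note that for sets living in a single block the global democracy ratios coincide with the block ones (by the $\ell^1$ structure and the fact that spreading a set across several blocks only increases its norm additively), and take $m=d_N\to\infty$; (6) finally bridge the unsigned and signed statements — $\mu^d_m\le\tmu^d_m$ is trivial, and the construction makes $\tmu_m$ large via the same clustered-versus-spread pair, so one uniform estimate handles both.

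The main obstacle I anticipate is step (4): getting $\mu^d_N$ (and $\tmu^d_N$) to be genuinely of order $N^{1/4}$ rather than accidentally larger. One has to verify that \emph{no} disjoint pair of equal-size sets realizes a ratio bigger than $\approx N^{1/4}$, which requires that the block norm have no hidden ``resonance'' — in particular that a set cannot simultaneously be cheap (spread out) in a way that a disjoint set of the same size can be expensive (clustered) by more than the $N^{1/4}$ factor. This is precisely the reason for choosing the scaling constant to be the geometric mean $N^{-1/2}$ of the two natural scales ($1$ for clustered, $N^{-1}$ per point for fully spread): it forces the worst disjoint ratio down to $\sqrt{N^{1/2}} = N^{1/4}$ while leaving the general (not-necessarily-disjoint) ratio at $N^{1/2}$. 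I would also need to double-check that disjointness genuinely prevents the clustered configuration from being reachable by the second set once the first set already occupies part of the relevant interval — this is where the interval lengths must be chosen carefully (length exactly $N$, so a set of size $N$ fits in one interval only if it occupies that interval entirely, leaving no room for a disjoint clustered companion of size $N$).
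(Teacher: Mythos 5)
Your step (4) is where the argument breaks, and the failure is intrinsic to the kind of norm you chose, not a matter of bookkeeping. In a block of dimension $N^2$ with norm $\|a\|=\max\{N^{-1/2}\sum_n|a_n|,\ \sup_I|\sum_{n\in I}a_n|\}$ over $N$ prescribed intervals of length $N$, the expensive configuration (a set clustered inside one interval) and the cheap configuration (a set spread with few points per interval) can be realized by \emph{disjoint} sets simultaneously: take $A$ filling one interval, so $\|\bone_A\|\approx N$, and $B$ of the same cardinality placed one point per remaining interval, so $\|\bone_B\|\approx N^{-1/2}N=N^{1/2}$. These sets are disjoint, hence $\mu^d_N\gtrsim N^{1/2}\approx\mu_N$ and no gap survives (the same happens already at scale $|A|=|B|=N^{1/2}$: a clustered $B$ has norm $N^{1/2}$ while a spread disjoint $A$ has norm $\approx 1$). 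Your closing worry is misdirected: the danger is not that a disjoint companion of a clustered set could \emph{also} be clustered (that would only make the ratio small), but that it can be \emph{spread and cheap}; your norm never couples the two sets, so disjointness is never used, and no choice of interval length or of the scaling constant repairs this. Also, the auxiliary claim that $\|\bone_{\e A}\|\gtrsim N^{1/4}$ for every nonempty $A$ is false for small spread sets, whose norm can be $\approx 1$.

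The paper's construction is designed precisely to create the coupling you are missing. On each block $S_k$ the "expensive" functionals are not plain interval sums but pairings $\al_k\,|\langle\bone_{\sigma S_k},\cdot\rangle|$ with \emph{balanced} sign patterns $\sigma$ (signs summing to zero on $S_k$), so the full block $\bone_{S_k}$ is cheap while a set occupying about half of $S_k$ is expensive; a second functional $\beta_k\sup_{|S|=N_k,\,S\subset T_k}\sum_{j\in S}|x_j|$ keeps spread sets in the tail from being too cheap. The key point (their Lemma giving \eqref{Timur0}) is that for any disjoint competitor $B$, the balanced functionals applied to $\bone_{\eta B}$ pick up $\min\{|B\cap S_k|,|S_k\setminus B|\}$; since $A\cap S_k\subset S_k\setminus B$, an expensive $A$ forces every disjoint $B$ of the same size to be comparably expensive, either through this term or through the $\beta_k$ tail term. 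That is exactly the mechanism your interval-sum norm lacks. (Your choice of target exponents, $\mu_N\approx N^{1/2}$ versus $\mu^d_N\approx N^{1/4}$, would be fine in principle — the gap must be essentially quadratic because of $\tmu_m\leq(\tmu^d_m)^2$, and the paper realizes the nearly extremal case $\mu_{N_k}\approx N_k/\mathrm{polylog}$ with $\tmu^d_{N_k}\leq\sqrt{N_k}$ — but to get there you need a norm in which expensiveness is inherited by disjoint sets, e.g.\ through mean-zero functionals as above, plus rapidly increasing block sizes to control comparisons across blocks.)
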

\begin{proof}
Let $N_0=1$, and define recursively $N_k=2^{2^{N_{k-1}}}$, and $N_k'=N_1+\ldots +N_{k-1}$.
Consider the blocks of integers
\[
S_k=\big\{N'_k+1,\ldots, N'_k+N_k
\big\},\]
and denote the tail blocks by $T_k=\cup_{j\geq k+1} S_{j}$. Finally, let
\[
\fN_k=\big\{(\sigma_j)_{j\in S_k}\mid\sigma_j\in\{\pm1\}\mand \sum_{j\in S_k}\sigma_j=0\big\}.
\]
We define a real Banach space $\SX$ as the closure of $c_{00}$ with the norm
\[
\|x\|\;=\;\max\Big\{\;\|x\|_\infty,
\;\sup_{k\geq1} \al_k\,\sup_{\sigma\in \fN_k}\big|\langle \bone_{\sigma S_k},x\rangle\big|,\;
\sup_{k\geq1}\beta_k\sup_{{S\subset T_k}\atop{|S|=N_k}}\sum_{j\in S}|x_j|\;\Big\},
\]
where the weights $\al_k$ and $\beta_k$ are chosen as follows:
\[
\al_k=2^{-N_{k-1}}=\frac1{\log_2 N_k}\mand \beta_k=\frac1{\sqrt{N_k}}.
\]
Observe that
\[
N'_k=N_1+\ldots+N_{k-1}\leq 2 N_{k-1}=2\log_2\log_2 N_k
\mand \frac{\al_k}{\beta_k}=\frac{\sqrt{N_k}}{\log_2 N_k}.
\]
{\bf Claim 1:} $\quad\Ds\tmu_{N_k}\geq \mu_{N_k}\geq \frac{N_k/2}{(\log_2 N_k)\sqrt{\log_2\log_2 N_k}}$, for all $k\geq1$.

\begin{proof}
Pick any $A\subset S_k\cup S_{k+1}$ such that $|A|=N_k$ and $|A\cap S_k|=|A\cap S_{k+1}|=N_k/2$. Then
\[
\|\bone_A\|\geq \al_k\,N_k/2=\frac{N_k/2}{\log_2 N_k}.
\]
Next, pick $B=S_k$, so that $|B|=|A|=N_k$ and
\[
\|\bone_B\|=\max\Big\{\;1,\;\al_k\cdot 0,\;\sup_{n\leq k-1}\beta_n N_n\;\Big\}=\beta_{k-1}N_{k-1}=\sqrt{N_{k-1}}=
\sqrt{\log_2\log_2 N_k}.
\]
Then $\mu_{N_k}\geq \|\bone_A\|/\|\bone_B\|\geq \frac{N_k/2}{(\log_2 N_k)\sqrt{\log_2\log_2 N_k}}$.
\end{proof}

\bline {\bf Claim 2:} $\quad\Ds\mu^d_{N_k}\leq \tmu^d_{N_k}\leq \sqrt{N_k}$, for all $k\geq2$.

\begin{proof}
Let $A,B$ be any pair of disjoint sets with $|A|=|B|\leq N_k$, and let $|\e|=|\eta|=1$.
If $|A|=|B|\leq \sqrt{N_k}$, then the trivial bounds $\|\bone_{\e A}\|\leq |A|$ and $\|\bone_{\eta B}\|\geq1$ give
\[
\frac{\|\bone_{\e A}\|}{\|\bone_{\eta B}\|}\leq \sqrt{N_k}.
\]
So, it remains to consider the cases $\sqrt{N_k}<|A|=|B|\leq N_k$. We split $A$ into three parts\[
A_0=A\cap S_k,\quad A_+=A\cap T_k,\quad A_-=A\cap[S_1\cup\ldots\cup S_{k-1}].
\]
Then, we have the following upper bound
\Beas
\|\bone_{\e A}\| & \leq&  \max\Big\{1,\; \sup_{n<k}\al_{n}|A_-|, \;\al_k|A_0|,\;\sup_{n>k}\al_n N_k,\;
\sup_{n<k}\beta_nN_n,\;\sup_{n\geq k}\beta_n|A|\;\Big\}\\
& \leq & \max\Big\{\;  N'_k, \;\al_k|A_0|,\;\beta_k|A|\;\Big\},
\Eeas
due to the elementary inequalities
\Bi
\item $\sup_{n<k} \al_n|A_-|\leq |A_-|\leq N'_k
$
\item $\sup_{n>k}\al_n N_k=\al_{k+1} N_k=N_k2^{-N_k}\leq1$
\item $\sup_{n<k}\beta_nN_n=\sqrt{N_{k-1}}\leq N_{k-1}\leq N'_k$
\item $\sup_{n\geq k}\beta_n|A|=\beta_k|A|$.
\Ei
Moreover, since $\beta_k|A|\leq\min\{\beta_kN_k=\sqrt{N_k},\;\al_k|A|\;\}$, we derive
\Be
\|\bone_{\e A}\|\leq \max \{\sqrt{N_k}, \al_k|A_0|\}\mand \|\bone_{\e A}\|\leq \max \{N'_k,\al_k|A|\}.
\label{upperA}\Ee
We now give a lower bound for $\|\bone_{\eta B}\|$. The key estimate will rely on the following

\begin{lemma}
Let $B_0=B\cap S_k$ and $B^c_0=S_k\setminus B_0$. Then
\Be
\sup_{\sigma\in \fN_k}\big|\langle\bone_{\sigma S_k},\bone_{\eta B_0}\rangle\big|\,\geq\,
\min\{|B_0|,|B_0^c|\}.
\label{Timur0}\Ee
\end{lemma}
\begin{proof}
If $|B_0| \leq N_k/2$, then we may select any $\sigma \in \fN_k$ such that $\sigma|_{B_0} = \eta$ (which is possible since $\vert B_0^c \vert \geq \vert B_0 \vert$), which gives
	$$
	 |\langle \bone_{\sigma S_k}, \bone_{\eta B_0} \rangle | = |B_0|=\min\{|B_0|,|B_0^c|\}.
	$$
Assume now that $\vert B_0 \vert > N_k/2$. Pick any $S \subset B_0$ with $\vert S \vert = \vert B_0^c \vert = N_k - \vert B_0 \vert$. Choose $\nu \in \lbrace -1,1\rbrace^{B_0^c}$ so that $\sum_{i \in S} \eta_i + \sum_{i \in B_0^c} \nu_i = 0$. Choose $\tau \in \lbrace -1,1\rbrace^{B_0 \backslash S}$ so that $\sum_{i \in B_0 \backslash S} \tau_i = 0$. Replacing $\tau$ by $-\tau$, if necessary, we may assume that $\sum_{i\in B_0\setminus S}\tau_i\eta_i\geq0$. Finally, define $\sigma\in \fN_k$ by setting
\[
\sigma|_S= \eta|_S, \quad \sigma|_{B^c_0}= \nu|_{B^c_0}, \quad \sigma|_{B_0 \backslash S}=\tau|_{B_0 \backslash S}.
\]
 Then,
	$$
	 |\langle \bone_{\sigma S_k}, \bone_{\eta B_0} \rangle |\, =\, \sum_{i\in S} \eta_i^2 + \sum_{i \in B_0 \backslash S} \tau_i \eta_i \, \geq  
	|S| = |B_0^c|  =\min\{|B_0|,|B_0^c|\}   \,. \qedhere
	$$
\end{proof}
From the lemma and the definition of the norm we see that
\Be
\|\bone_{\eta B}\|\geq \max\Big\{\,1,\; \al_k\min\{|B_0|,|B^c_0|\},\;\beta_k|B_+|\;\Big\}.
\label{lowB}\Ee
We shall finally combine the estimates in \eqref{upperA} and \eqref{lowB} to establish Claim 2.
We distinguish two cases

\sline\emph{Case 1:} $\min\{|B_0|,|B^c_0|\}=|B^c_0|$. Then, since $A_0\subset B^c_0$, we see that
\[
\al_k|A_0|\leq \al_k|B^c_0|\leq \|\bone_{\eta B}\|,
\]
and therefore the first estimate in \eqref{upperA} gives\[
\frac{\|\bone_{\e A}\|}{\|\bone_{\eta B}\|}\leq\frac{\max\{\sqrt{N_k},\|\bone_{\eta B}\|\}}{\|\bone_{\eta B}\|}
\leq \sqrt{N_k}.
\]
\sline\emph{Case 2:} $\min\{|B_0|,|B^c_0|\}=|B_0|$. Then, \eqref{lowB} reduces to
\[
\|\bone_{\eta B}\|  \geq    \max\big\{\,\al_k|B_0|,\;\beta_k|B_+|\;\big\}\geq\beta_k\frac{|B_0|+|B_+|}2=  \beta_k\frac{|B|-|B_-|}2\geq \beta_k|B|/4,\]
since $|B_-|\leq N'_k\leq\sqrt{N_k}/2\leq |B|/2$, if $k\geq2$. Also, the second bound in \eqref{upperA} reads
\[
\|\bone_{\e A}\|\leq \al_k |A|,
\]
since $N'_k\leq \sqrt{N_k}/\log_2 N_k=\al_k\sqrt{N_k}\leq \al_k|A|$, if $k\geq2$. Thus
\[
\frac{\|\bone_{\e A}\|}{\|\bone_{\eta B}\|}\leq\frac{\al_k|A|}{\beta_k|B|/4}=\frac{4\al_k}{\beta_k}=
\frac{4\sqrt{N_k}}{\log_2 N_k}
\leq \sqrt{N_k}.
\]
This establishes Claim 2.
\end{proof}
From Claims 1 and 2 we now deduce that
\[
\frac{\mu_{N_k}}{[\tmu^d_{N_k}]^{2-\e}}\geq \frac{N_k^{\e/2}/2}{(\log_2 N_k)\sqrt{\log_2\log_2 N_k}}\to\infty,
\]
and therefore
\[
\limsup_{N\to\infty}\frac{\mu_{N}}{[\mu^d_{N}]^{2-\e}}=\limsup_{N\to\infty}\frac{\tmu_{N}}{[\tmu^d_{N}]^{2-\e}}=\infty. \qedhere
\]
\end{proof}

\section{Norm convergence of $\cht_m x$ and $\cG_m^t x$}

In this section we search for conditions in $\cB=\{\be_n\}_{n=1}^\infty$ under which it holds
\Be
\Vert x-\ch_m(x)\Vert\to0,\quad \forall\;x\in\SX.\label{SYconv}\Ee
In \cite[Theorem 1.1]{SY} this convergence is asserted for all 
``bases'' $\{\be_n,\be^*_n\}_{n=1}^\infty$ satisfying (a)-(b)-(c).
The proof however, does not seem complete, so we investigate here whether \eqref{SYconv}
may be true in that generality.

\

The solution to this question requires the notion of \emph{strong M-basis}; see \cite[Def 8.4]{SingerII}.
We say that $\cB$ is a strong M-basis if additionally to the conditions (a)-(d) in \S1 it also holds
\Be
\,{\overline{\span\{\be_n\}_{n\in A}}}\,=\,\big\{x\in\SX\mid \supp x\subset A\big\},\quad \forall\;A\subset\SN.
\label{strongM}\Ee
Clearly, all Schauder or Ces\`aro bases (in some ordering) are strong M-bases; see e.g. \cite{ruckle} for further examples. However, there exist M-bases which are not strong M-bases, see e.g. \cite[p. 244]{SingerII}, or \cite{dov}\footnote{We thank V. Kadets for kindly providing this reference.} for seminormalized examples in Hilbert spaces.

\begin{lemma} If $\cB$ is an M-basis which is not a strong M-basis, then there exists an $x_0\in\SX$ such that,
	for all Chebyshev greedy operators $\ch_m$,
	\Be
	\liminf_{m\to\infty}\|x_0-\ch_mx_0\|>0.\label{liminf}
	\Ee
\end{lemma}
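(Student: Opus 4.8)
The plan is to use the failure of the strong M-basis property to produce a witness vector $x_0$ which is "far" from every coordinate subspace it should belong to. Since $\cB$ is not a strong M-basis, by \eqref{strongM} there exists a set $A\subset\SN$ and a vector $x_0\in\SX$ with $\supp x_0\subset A$ but $x_0\notin \overline{\span\{\be_n\}_{n\in A}}$. Let $d:=\dist\big(x_0,\overline{\span\{\be_n\}_{n\in A}}\big)>0$. The point is that any Chebyshev greedy sum $\ch_m x_0$ is, by definition, of the form $\sum_{n\in \Ga}a_n\be_n$ with $\Ga=\supp\ch_mx_0$ a $t$-greedy set for $x_0$ (here $t=1$); and since $\supp x_0\subset A$, the greedy selection forces $\Ga\subset\supp x_0\subset A$ (the inequality \eqref{tineq} with $t=1$ can only pick indices where $\ben(x_0)\neq0$, possibly together with some zero-coefficient indices, but we may always arrange — or it is automatic — that $\Ga\subset A$ since zero coordinates contribute nothing and can be taken inside $A$ or simply discarded for the lower bound). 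Hence $\ch_mx_0\in \span\{\be_n\}_{n\in A}$, and therefore
\[
\|x_0-\ch_mx_0\|\,\geq\,\dist\big(x_0,\span\{\be_n\}_{n\in A}\big)\,=\,d\,>\,0,
\]
uniformly in $m$, which gives \eqref{liminf} (indeed with $\liminf$ replaced by $\inf_m$).

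The first step I would carry out carefully is the extraction of the witness: spelling out that the negation of \eqref{strongM} yields a \emph{specific} $A$ and a \emph{specific} $x_0$ with $\supp x_0\subset A$ and positive distance to the closed span. This is just unwinding the definition, but one should note that $\supp x_0\subset A$ is exactly the condition appearing on the right-hand side of \eqref{strongM}, so the failure of equality means the left-hand side is a proper closed subspace of the right-hand side, and any $x_0$ in the difference works.

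The second step is the observation that Chebyshev greedy operators of $x_0$ have support contained in $A$. Here one uses that a greedy set $\Ga\in G(x_0,m,1)$ satisfies $\min_{n\in\Ga}|\ben(x_0)|\geq \max_{n\notin\Ga}|\ben(x_0)|$; if $|\supp x_0|\geq m$ this already forces $\Ga\subseteq\supp x_0\subseteq A$, while if $|\supp x_0|<m$ then $\Ga$ may contain indices outside $A$ with zero coefficient — but then $\ben(x_0)=0$ there, so $\ch_m x_0=P_\Ga^{\mathrm{ch}}(x_0)$ is still a linear combination of $\{\be_n\}_{n\in\Ga\cap A}$ up to harmless zero terms, and in any case the norm-minimizing coefficients make $\ch_mx_0$ lie in $\span\{\be_n\}_{n\in A}$ (assigning $0$ to the extra coordinates is at least as good). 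The cleanest way to handle this edge case in the write-up is: if $x_0\notin \overline{\span\{\be_n\}_{n\in A}}$ then in particular $A$ must be infinite (otherwise the span is finite-dimensional, hence closed, and equals all $x$ with support in the finite set $A$ — contradiction once one checks $x_0$ has support in $A$, which it does), so $|A|=\infty\geq m$ and the greedy set is automatically inside $\supp x_0\subseteq A$ for every $m$.

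The final step is the distance estimate, which is immediate: $\ch_mx_0\in\span\{\be_n\}_{n\in A}\subset\overline{\span\{\be_n\}_{n\in A}}$, so $\|x_0-\ch_mx_0\|\geq d$ for all $m$ and all choices of Chebyshev greedy operator, yielding \eqref{liminf}. I do not anticipate a genuine obstacle here; the only subtlety — and the thing most likely to need a careful sentence — is the bookkeeping around whether a greedy set can "escape" the support of $x_0$, which is dispensed with by the observation that $A$ is necessarily infinite.
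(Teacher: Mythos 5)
Your route is the same as the paper's: extract from the failure of \eqref{strongM} a set $A$ and a vector $x_0$ with $\supp x_0\subset A$ and $d=\dist\big(x_0,\overline{\span}\{\be_n\}_{n\in A}\big)>0$, observe that Chebyshev greedy approximants of $x_0$ live in $\span\{\be_n\}_{n\in A}$, and conclude $\|x_0-\ch_m x_0\|\geq d$ for every $m$ and every choice of operator. However, the step where you rule out greedy sets escaping $A$ is not sound as written, in either of the two ways you argue it. First, the claim that ``assigning $0$ to the extra coordinates is at least as good'' is false in general: the Chebyshev operator minimizes over \emph{all} coefficients on the greedy set $\Ga$, and if $\Ga$ contains indices outside $A$ the minimization runs over a strictly larger subspace, which can only improve the approximation and could well beat $d$; you cannot replace the minimizer by its restriction to $\Ga\cap A$. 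Second, your ``cleanest way'' is a non sequitur: what is relevant is not that $A$ is infinite but that $\supp x_0$ is infinite. If $\supp x_0$ were finite and $m>|\supp x_0|$, then \emph{every} set of cardinality $m$ containing $\supp x_0$ is a greedy set for $x_0$ (both sides of \eqref{tineq} vanish), so some admissible Chebyshev operator does pick indices outside $A$, and your lower bound no longer applies; since the lemma quantifies over all Chebyshev greedy operators, you cannot ``arrange'' the choice.

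The gap is easy to close, and this is the point your write-up is missing: $\supp x_0$ must be infinite. Indeed, if $\supp x_0$ were finite, then by totality of the dual system (property (d) of an M-basis) $x_0-\sum_{n\in\supp x_0}\ben(x_0)\be_n$ is annihilated by every $\ben$, hence zero, so $x_0\in\span\{\be_n\}_{n\in A}$, contradicting $d>0$. (This also disposes of your finite-$A$ discussion.) Once $\supp x_0$ is infinite, any greedy set $\Ga$ of order $m$ containing an index with zero coefficient would force $\max_{n\notin\Ga}|\ben(x_0)|=0$, i.e.\ $\supp x_0\subset\Ga$, impossible; hence $\Ga\subset\supp x_0\subset A$ for all $m$, $\ch_m x_0\in\span\{\be_n\}_{n\in A}$, and your distance estimate gives \eqref{liminf} exactly as in the paper.
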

\begin{proof}
	If $\cB$ is not a strong M-basis there exists some set $A\subset\SN$ (necessarily infinite) and some $x_0\in\SX$ with $\supp x_0\subset A$ such that
	\[
	\dt=\dist (x_0,[\be_n]_A)>0.\]
	Since $\supp\ch_mx_0$ is always a subset of $A$, this implies \eqref{liminf}.
\end{proof}
\begin{Remark}{\rm
		The above reasoning also implies that $\liminf_{m}\|x_0-\cG_mx_0\|>0$, for all greedy operators $\cG_m$.
		In particular, for M-bases which are not strong, the quasi-greedy condition \Be\label{Cq}
		C_q:=\sup_{{\cG_m\in\SG_m}\atop{m\in\SN}}\|\cG_m\|<\infty\Ee
		does not imply that $\cG_mx$ converges to $x$ for all $x\in\SX$.
		So the standard characterization in \cite[Theorem 1]{Wo} needs the extra assumption that $\cB$ is a strong M-basis.
}\end{Remark}

A corrected version of \cite[Theorem 1.1]{SY} (and also of ``$3\Rightarrow1$'' in \cite[Theorem 1]{Wo}) is the following.

\begin{proposition}
	If $\cB$ is a strong M-basis then, for all Chebyshev $t$-greedy operators $\ch^t_m$, 
	\Be
	\lim_{m\to\infty}\|x-\ch^t_mx\|=0,\quad\forall\;x\in\SX. 
	\label{limch}
	\Ee
	If additionally $C_q<\infty$, then for all $t$-greedy operators $\cG^t_m$, 
	\Be
	\lim_{m\to\infty}\|x-\cG^t_mx\|=0,\quad\forall\;x\in\SX. 
	\label{limcG}
	\Ee
\end{proposition}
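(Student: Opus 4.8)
The plan is to first establish \eqref{limch}, and then deduce \eqref{limcG} by comparing $\cG^t_m x$ with a suitable Chebyshev $t$-greedy operator. For \eqref{limch}, fix $x\in\SX$ and $\e>0$. Using property (c) of the M-basis, pick a finitely supported $u=\sum_{n\in F}u_n\be_n$ with $\|x-u\|<\e$; set $N=\max F$. The key point is that for $m$ large, the $t$-greedy set $A=\supp\ch^t_m(x)$ must contain all of $F$: indeed, since $\lim_n\ben(x)=0$, there are only finitely many $n$ with $|\ben(x)|\geq t\min_{n\in F}|\ben(x)|$ (this minimum being positive once we also note that if some $\ben(x)=0$ for $n\in F$ we can shrink $F$), so once $m$ exceeds that number, \eqref{tineq} forces $F\subset A$. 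Alternatively, and more robustly, one argues that $\|(I-P_A)u\|\to0$: writing $u=P_F u$, the set $F\setminus A$ has $|\ben(x)|$ small uniformly and Lemma~\ref{conv} bounds $\|P_{F\setminus A}u\|$ by $\max_{n\in F\setminus A}|u_n|\,\sup_{|\e|=1}\|\bone_{\e F}\|$, which tends to $0$ as $m\to\infty$ since $F\setminus A$ eventually becomes empty.

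Once we know $A\supset F$ for $m$ large (say $m\geq m_0$), the Chebyshev minimality of $\ch^t_m$ gives
\[
\|x-\ch^t_m(x)\|\;=\;\min_{(a_n)}\Big\|x-\sum_{n\in A}a_n\be_n\Big\|\;\leq\;\Big\|x-\sum_{n\in F}u_n\be_n\Big\|\;=\;\|x-u\|\;<\;\e,
\]
where the inequality uses that $u$ is a competitor supported in $A\supset F$ (taking $a_n=u_n$ for $n\in F$ and $a_n=0$ otherwise). Hence $\|x-\ch^t_m(x)\|<\e$ for all $m\geq m_0$, proving \eqref{limch}. Note this argument did not yet use the strong M-basis hypothesis in an essential way beyond (c); the strong M-basis assumption is what rules out the obstruction in the preceding Lemma, i.e. it guarantees that the finitely supported vectors with support contained in any fixed infinite $A$ are dense in $\{x:\supp x\subset A\}$, which is implicitly needed to be sure the approximation by finitely supported $u$ is not wasted — in fact for \eqref{limch} one only needs density of all finitely supported vectors, so (c) suffices; the strong hypothesis becomes genuinely necessary for \eqref{limcG}.

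For \eqref{limcG}, fix $x$ and a $t$-greedy operator $\cG^t_m$ with $A=A(x,\cG^t_m)\in G(x,m,t)$. Choose a Chebyshev $t$-greedy operator $\ch^t_m$ with $\supp\ch^t_m(x)=A$ (same greedy set). Then $\cG^t_m(x)=P_A(x)$ and $\ch^t_m(x)=P_A(x)-P_A(w_m)$ for some $w_m$ with $\supp w_m\subset A$; we have $x-\cG^t_m(x)=(x-\ch^t_m(x))-P_A(w_m)$, so it suffices to show $\|P_A w_m\|\to0$. Here is where $C_q<\infty$ enters: the difference $\cG^t_m(x)-\ch^t_m(x)=P_A(w_m)$ is supported in the greedy set $A$, and one estimates $\|P_A w_m\|=\|\cG^t_m(x)-\ch^t_m(x)\|\leq\|x-\cG^t_m(x)\|+\|x-\ch^t_m(x)\|$; the second term goes to $0$ by \eqref{limch}, and for the first I would invoke the known fact that $C_q<\infty$ together with the strong M-basis property implies $\|x-\cG^t_m(x)\|\to0$ — this is exactly the corrected ``$3\Rightarrow1$'' direction of Wojtaszczyk's theorem, which one proves by the same density-of-finitely-supported-vectors argument: approximate $x$ by finitely supported $u$, note $x-\cG^t_m(x)=(I-\cG^t_m)(x-u)+(u-\cG^t_m u)$, the first term bounded by $(1+C_q)\|x-u\|$ and the second eventually $0$ since the greedy set absorbs $\supp u$ for $m$ large. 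The main obstacle is precisely organizing this last step cleanly: one must be careful that ``$\cG^t_m u=u$ for $m$ large'' requires $\supp u\subset A(x,\cG^t_m)$, which follows from $\lim_n\ben(x)=0$ and \eqref{tineq} as above, and that the strong M-basis hypothesis is what legitimizes passing to finitely supported approximants without losing information — without it, the preceding Lemma shows the conclusion can fail. Assembling these pieces gives \eqref{limcG}.
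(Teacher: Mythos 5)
There is a genuine gap, and it sits exactly where you claim the strong M-basis hypothesis is dispensable. For \eqref{limch} you approximate $x$ by a finitely supported $u=\sum_{n\in F}u_n\be_n$ obtained from property (c) alone, and then need $F\subset\supp\ch^t_m(x)$ for large $m$. But if $x$ has infinite support, every $t$-greedy set for $x$ is contained in $\supp x$ (an index $n_0$ with $\be^*_{n_0}(x)=0$ can never satisfy \eqref{tineq} when $\max_{n\notin A}|\ben(x)|>0$), so any index of $F$ lying outside $\supp x$ will \emph{never} enter the greedy set; your fallback ``shrink $F$'' discards the corresponding terms $u_n\be_n$ and there is no reason the truncated vector still approximates $x$ within $\e$ (there is no unconditionality available). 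The same defect kills the ``more robust'' variant: $F\setminus A$ need not become empty, and $\|P_{F\setminus A}u\|$ need not be small. What is really needed is a finitely supported approximant whose support is contained in $\supp x$, and that is precisely what the strong M-basis condition \eqref{strongM}, applied to $A=\supp x$, provides; with $B\subset\supp x$ one has $\al=\min_{n\in B}|\ben(x)|>0$ and the inclusion $B\subset\supp\ch^t_m(x)$ for $m>|\bLa_{t\al}|$ follows as you indicate. Your parenthetical claim that ``(c) suffices'' for \eqref{limch} is in fact false: the Lemma preceding the proposition exhibits, for every M-basis that is not strong, an $x_0$ with $\liminf_m\|x_0-\ch_m x_0\|>0$, so the hypothesis is essential already for the Chebyshev algorithm, not only for \eqref{limcG}.

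Your argument for \eqref{limcG} also needs repair. The identity $x-\cG^t_m(x)=(I-\cG^t_m)(x-u)+(u-\cG^t_m u)$ treats $\cG^t_m$ as linear, which it is not; moreover invoking ``the known fact'' that $C_q<\infty$ plus strong M-basis gives convergence of $\cG^t_m$ is circular, since that is the statement being proved. The correct route (the one the paper takes) is to fix a strong-M-basis approximant $z=\sum_{n\in B}b_n\be_n$ with $B\subset\supp x$, perturb the $b_n$ so that $b_n\neq\ben(x)$ on $B$, and show that for $m$ large the greedy set $A=\supp\cG^t_m(x)$ both contains $B$ and is itself a $t$-greedy set for $x-z$; then $\cG^t_m(x)-z=P_A(x-z)=\bar\cG^t_m(x-z)$ for some $\bar\cG^t_m\in\SG^t_m$, whence $\|x-\cG^t_m(x)\|\leq(1+\|\bar\cG^t_m\|)\|x-z\|$. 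Finally, since $C_q$ bounds only the $t=1$ greedy operators, one needs the weak-to-strong comparison $\sup_m\|\bar\cG^t_m\|\leq(1+4C_q/t)C_q$ from \cite[Lemma 2.1]{DKO}; your bound $(1+C_q)\|x-u\|$ silently assumes $t=1$.
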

\begin{proof}
	Given $x\in\SX$ and $\e>0$, by \eqref{strongM} there exists $z=\sum_{n\in B}b_n\be_n$ such that $\|x-z\|<\e$,
	for some finite set $B\subset\supp x$. Let $\al=\min_{n\in B}|\be^*_n(x)|$ and 
	\[\bLa_{\al}=\{n\mid |\be^*_n(x)|\geq \al\}.\]
	Since $\al>0$, this is a finite greedy set for $x$ which contains $B$. Moreover, we claim that
	\Be
	\bLa_{\al}\subset \supp\ch^t_m x=:A,\quad \forall\; m> |\bLa_{t\al}|.
	\label{incl}\Ee
	Indeed, if this was not the case there would exist $n_0\in\bLa_{\al}\setminus A$, and since $A$ is a $t$-greedy set for $x$, then $\min_{n\in A}|\ben(x)|\geq t|\be^*_{n_0}(x)|\geq t\al$. So, $A\subset\bLa_{t\al}$, which is a contradiction since $m=|A|>|\bLa_{t\al}|$.
	Therefore, \eqref{incl} holds and hence
	\[
	\|x-\ch^t_mx\|\leq \|x-\sum_{n\in B}b_n\be_n\|<\e,\quad \forall\; m> |\bLa_{t\al}|.
	\]
	This establishes \eqref{limch}.
	
	\
	
	We now prove \eqref{limcG}. As above, let $z=\sum_{n\in B}b_n\be_n$ with $B\subset\supp x$ and $\|x-z\|<\e$.
	Performing if necessary a small perturbation in the $b_n$'s, we may assume that $b_n\not=\ben(x)$ for all $n\in B$.
	Let now \[
	\al_1=\min_{n\in B}|\be^*_n(x)|,\quad \al_2=\min_{n\in B}|\be^*_n(x-z)|,\mand \al=\min\{\al_1,\al_2\}>0.\]
	Consider the sets
	\[\bLa_{t\al}=\{n\mid |\be^*_n(x)|\geq t\al\}=\{n\mid |\be^*_n(x-z)|\geq t\al\},\]
	which for all $t\in(0,1]$ are greedy sets for \emph{both} $x$ and $x-z$, and contain $B$.
	We claim that, \Be
	\mbox{if $m>|\bLa_{t\al}|$ and $A:=\supp\cG^t_mx$,} \quad {\rm then }\quad
	\bLa_{\al}\subset A \mand A\in G(x-z,m,t).
	\label{incl2}\Ee
	The assertion $\bLa_{\al}\subset A$ is proved exactly as in \eqref{incl}.
	Next, we must show that
	\[
	\mbox{if $n\in A$} \quad {\rm then }\quad |\be^*_n(x-z)|\;\geq\; t\,\max_{k\notin A}|\be^*_k(x-z)|\,=\; t\,\max_{k\notin A}|\be^*_k(x)|.
	\]
	This is clear if $n\in A\setminus B$ since $\ben(x-z)=\ben(x)$, and $A\in G(x,m,t)$.
	On the other hand, if $n\in B$, then $|\ben(x-z)|\geq \al_2\geq \al\geq \max_{k\in A^c}|\be^*_k(x)|$,
	the last inequality due to $\bLa_\al\subset A$.
	Thus \eqref{incl2} holds true, and therefore
	\[
	\cG^t_m(x)-z=\sum_{n\in A}\ben(x-z)\be_n\,=\,\bar{\cG}^t_m(x-z), 
	\]
	for some $\bar{\cG}^t_m\in\SG^t_m$. Thus,
	\[
	\|\cG^t_m(x)-x\| 
	\,=\, \|(I-\bar{\cG}^t_m)(x-z)\| 
	\,\leq \, (1+\|\bar{\cG}^t_m\|)\,\e,\]
	and the result follows from $\sup_{m}\|\bar{\cG}^t_m\|\leq (1+4C_q/t)C_q$, by \cite[Lemma 2.1]{DKO}.

\end{proof}

\bibliographystyle{plain}

\vskip 1truemm
\end{document}